\def\bZ{\ensuremath{\mathbb{Z}}}
\def\bP{\ensuremath{\mathbb{P}}}
\def\cA{\ensuremath{\mathcal{A}}}
\def\cB{\ensuremath{\mathcal{B}}}
\def\cF{\ensuremath{\mathcal{F}}}
\def\cJ{\ensuremath{\mathcal{J}}}
\def\cM{\ensuremath{\mathcal{M}}}
\def\cO{\ensuremath{\mathcal{O}}}
\def\cP{\ensuremath{\mathcal{P}}}
\def\cR{\ensuremath{\mathcal{R}}}
\def\cT{\ensuremath{\mathcal{T}}}
\def\cY{\ensuremath{\mathcal{Y}}}
\def\tC{\ensuremath{\tilde{C}}}
\DeclareMathOperator{\Aut}{Aut}
\DeclareMathOperator{\Pic}{Pic}
\DeclareMathOperator{\Nm}{Nm}
\DeclareMathOperator{\Fix}{Fix}
\DeclareMathOperator{\id}{id}
\DeclareMathOperator{\sheafhom}{\mathscr{H}\text{\kern -3pt {\calligra\large om}}\,}
\newcommand{\set}[1]{\left\{#1\right\}}
\theoremstyle{plain}
\newtheorem{theorem}{Theorem}[section]
\newaliascnt{corollary}{theorem}
\newtheorem{corollary}[corollary]{Corollary}
\newaliascnt{lemma}{theorem}
\newtheorem{lemma}[lemma]{Lemma}
\newaliascnt{proposition}{theorem}
\newtheorem{proposition}[proposition]{Proposition}
\theoremstyle{definition}
\newaliascnt{definition}{theorem}
\newaliascnt{example}{theorem}
\newaliascnt{rem}{theorem}
\newtheorem{rem}[rem]{Remark}
\def\ps@pprintTitle{%
 \let\@oddhead\@empty
 \let\@evenhead\@empty
 \let\@oddfoot\@empty
 \let\@evenfoot\@oddfoot
}
\title{Pseudoreflections on Prym Varieties}
\author[R. Auffarth, M. Lahoz, J.C. Naranjo]{Robert Auffarth, Mart\'{\i} Lahoz, Juan Carlos Naranjo}
\address{R. Auffarth \\Departamento de Matem\'aticas, Facultad de
Ciencias, Universidad de Chile, Santiago\\Chile}
\email{rfauffar@uchile.cl}
\address{M. Lahoz \newline 1. Departament de Matem\`atiques i Inform\`atica,
Universitat de Barcelona, Gran Via de les Corts Catalanes, 585, 08007 Barcelona, Spain \newline 2. Centre de Recerca Matemàtica, Edifici C, Campus Bellaterra, 08193 Bellaterra, Spain }
\email{marti.lahoz@ub.edu}
 \address{J.C. Naranjo \newline 1. Departament de Matem\`atiques i Inform\`atica,
Universitat de Barcelona, Gran Via de les Corts Catalanes, 585, 08007 Barcelona, Spain \newline 2. Centre de Recerca Matemàtica, Edifici C, Campus Bellaterra, 08193 Bellaterra, Spain }
 \email{jcnaranjo@ub.edu}
\keywords{Abelian varieties, Group actions, Smooth quotiens, Pseudoreflections, Prym varieties}
\subjclass[2010]{14L30, 14H40, 14K10}%
\begin{document}

\begin{abstract}
We show that for every $g\geq 5$, the locus of Prym varieties in $\cA_{g-1}$ that possess a pseudoreflection of geometric origin is the union of three different non-empty explicit irreducible families.
This is in stark contrast to the loci of Jacobian varieties that possess a pseudoreflection of geometric origin, which is empty for any genus greater than 3.
In genus $6$, a distinguished example of Prym varieties with a pseudoreflection is given by intermediate Jacobians of cubic threefolds that possess an Eckardt point.
\end{abstract}

\thanks{R. Auffarth is partially supported by Fondecyt grant 1220997. M. Lahoz and J.C. Naranjo are partially supported by the Departament de Recerca i Universitats de la Generalitat de Catalunya (2021 SGR 00697), the Proyecto de Investigaci\'on PID2023-147642NB-100 and the Spanish State Research Agency, through the Severo Ochoa and María de Maeztu Program for Centers and Units of Excellence in R\&D (CEX2020-001084-M)}

\maketitle

\section{Introduction}

In \cite[Theorem~1.2]{ALA} it was shown that if $C$ is a smooth projective curve of genus $g\geq 1$ and $G\leq\Aut(C)$ is a finite subgroup such that $JC/G$ is smooth, then $g\leq 3$ and the pair $(C,G)$ can be described explicitly.
In other words, if a Jacobian variety has a smooth quotient by a finite group whose action \textit{comes from the curve}, then its dimension is at most $3$.
The natural question to ask then is if a similar result holds true for Prym varieties.

Consider the following: Let $C$ be a smooth projective curve of genus $g\geq 1$, let $\eta\in JC[2]$ be a non-trivial 2-torsion point, and let $G\leq\Aut(C)$ be a finite group of automorphisms such that each of its elements fixes $\eta$.
Then $\eta$ induces an \'etale double covering
\[f:\tC\to C\]
given by a fixed point free involution $\iota\in\Aut(\tC)$, and there exists an extension 
\[1\to\langle\iota\rangle\to\tilde{G}\to G\to 1\]
such that $\tilde{G}\leq\Aut(\tC)$ and $f$ is $\tilde{G}$-equivariant.
Let $P:=P(\tC,C)$ denote the Prym variety of $f$.
Since $\iota$ acts on $J\tC=f^*JC+P$ as the identity on $f^*JC$ and as $-1$ on $P$, and since the action of $\tilde{G}$ on $J\tC$ by pullback also preserves both of these factors, we have that $\iota$ is central in $\tilde{G}$.
We will say that any automorphism on $P$ that fixes the polarization and that comes from an automorphism on $C$ in this way is \textit{of geometric origin}.

We recall that if $H$ is a group that acts on a manifold $X$, then the Chevalley-Shepard-Todd Theorem states that $X/H$ is smooth if and only if for every $x\in X$, the group $\operatorname{Stab}_H(x)$ is generated by \textit{pseudoreflections}, where a pseudoreflection (at $x$) is an element of the group whose fixed locus passing through $x$ is of pure codimension 1.

We would like to understand the following question:\\

\textbf{Question:} When does there exist $\sigma\in G$ that can be lifted to $P$ as a pseudoreflection?\\

If $G$ contains such an element, then we would have in particular that $P/\sigma$ is smooth.
The main theorem of this article states that, unlike the case of Jacobians, there are Prym varieties of arbitrarily large dimension with a smooth quotient.
Let $\cR_g$ denote the moduli space of \'etale double covers of curves of genus $g$, and let 
\[\cP_g:\cR_g\to \cA_{g-1}\]
denote the Prym map.
Our main theorem states the following:

\begin{theorem}
For every $g\ge5$, the locus of Prym varieties in $\cA_{g-1}$ that possess a pseudoreflection of geometric origin is the union of three different explicit irreducible families, one of dimension $2g-4$ and the other two of dimension $2g-3$.
\end{theorem}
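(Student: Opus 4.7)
The plan is to reduce the pseudoreflection question to a finite numerical classification via Chevalley--Weil and Riemann--Hurwitz, eliminate all but one geometric type, and then stratify that type into three irreducible families.

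\emph{Step 1 (eigenvalue translation).} First I would translate the pseudoreflection property into a condition on the representation of $\tG$ on $T_0 P = H^0(\tC, \omega_{\tC})^-$: a lift $\tilde\sigma \in \tG$ acts as a pseudoreflection iff its eigenvalues on this space are $(\zeta, 1, \ldots, 1)$ for some root of unity $\zeta \neq 1$. Since $\iota^* = -\id$ on $H^0(\tC, \omega_{\tC})^-$, the companion lift $\iota\tilde\sigma$ being a pseudoreflection amounts to $\tilde\sigma^*$ having eigenvalues $(\mu, -1, \ldots, -1)$ with $\mu \neq -1$. An order analysis---ruling out $\tilde\sigma^2 = \iota$ (which forces eigenvalues $\pm i$ on $H^0(\tC, \omega_{\tC})^-$, incompatible with a pseudoreflection for $g \geq 3$) and higher-order $\sigma$'s via analogous $n$-th-root obstructions---should then force both $\sigma$ and $\tilde\sigma$ to be involutions.

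\emph{Step 2 (reduction to $g' = 2$).} With $\sigma, \tilde\sigma$ involutions, $\langle \iota, \tilde\sigma \rangle \cong (\bZ/2)^2$ acts on $\tC$, yielding intermediate quotients $C$, $Y = \tC/\langle \tilde\sigma \rangle$, $Y' = \tC/\langle \iota\tilde\sigma \rangle$ of genera $g, h, h'$, and common quotient $C/\sigma$ of genus $g'$. Combining the pseudoreflection eigenvalue count with Chevalley--Weil on the two involutions should give $\{h, h'\} = \{g + g' - 2,\ g' + 1\}$; together with the Riemann--Hurwitz bounds $h, h' \leq g$ and the fixed-point balance $s + s' = 2r = 4g + 4 - 8g'$, this forces $g' \leq 2$. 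The hyperelliptic case ($g' = 0$) and the bielliptic case ($g' = 1$) are then eliminated for $g \geq 5$ by invoking the classical descriptions of the corresponding Pryms as (isogenous to) Jacobians or products of Jacobians---in the hyperelliptic case via Mumford's construction---so that a pseudoreflection of geometric origin on $P$ would descend to one on a lower-dimensional Jacobian, contradicting \cite[Thm.~1.2]{ALA}. This leaves $g' = 2$ with $r = 2g - 6$.

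\emph{Step 3 (three irreducible families).} In the $g' = 2$ case the 2-torsion $\eta$ determines a discrete type at each fixed point of $\sigma$---whether $\tilde\sigma$ fixes or swaps the fiber of $f: \tC \to C$ over that point---and the pseudoreflection condition forces all fixed points of $\sigma$ to be of the same type. The moduli space of pairs $(C, \sigma)$ with $C/\sigma$ of genus $2$ and $r$ marked branch points has dimension $3 + r = 2g - 3$, and a careful enumeration of the compatible $\eta$'s should organize the resulting locus in $\cR_g$ into three irreducible families: two of dimension $2g - 3$ distinguished by the class of $\eta$ in $JC[2]^\sigma$ modulo $\pi^* J(C/\sigma)[2]$ (the two being interchanged by the Pantazis bigonal construction), and one of dimension $2g - 4$ cut out by a codimension-$1$ coincidence (realized for $g = 6$ by intermediate Jacobians of cubic threefolds with an Eckardt point). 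Non-emptiness for $g \geq 5$ would follow from explicit construction; irreducibility from that of the base moduli; and the dimensions carry over to $\cA_{g-1}$ either by generic injectivity of $\cP_g$ (for $g \geq 7$) or by a direct fiber computation (for $g = 5, 6$).

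\emph{Main obstacle.} The hardest step will be Step 3: cleanly isolating the three families---in particular the codimension-$1$ sub-locus producing the $(2g-4)$-dimensional family---and verifying their irreducibility and distinctness as loci in $\cA_{g-1}$. A secondary technical point is the reduction in the hyperelliptic/bielliptic cases (Step 2), which requires carefully tracing the geometric pseudoreflection on $P$ through its description as a (product of) Jacobian(s) so that \cite[Thm.~1.2]{ALA} actually applies.
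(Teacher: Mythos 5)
Your Step 1 is broadly in the spirit of the paper (which rules out odd prime order via a Riemann--Hurwitz count on the tower of quotient curves rather than Chevalley--Weil), and your numerology in Step 2 --- the genera $\{g'+1,\ g+g'-2\}$ of the intermediate quotients and the bound $g'\le 2$ on the genus of $F=\tC/\langle\iota,\sigma\rangle$ --- agrees with the paper's Lemma on involutions. But the proposal then goes wrong in a structural way: the three families in the theorem are precisely the three cases $g(F)=0$, $g(F)=1$ and $g(F)=2$, whereas you eliminate the first two and try to manufacture all three families inside the case $g(F)=2$.

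Concretely, the elimination in your Step 2 fails. In the case $g(F)=0$ (Mumford's case) one has $P\cong JC_1\times JC_2$ with $JC_1$ an elliptic curve and $C_2$ hyperelliptic of genus $g-1$, and $\sigma$ acts as $-1$ on $JC_1$ and trivially on $JC_2$; the quotient $P/\sigma=\bP^1\times JC_2$ is smooth and there is no contradiction with \cite[Theorem~1.2]{ALA}, since that theorem concerns a group acting on $JC_2$ \emph{through automorphisms of the curve $C_2$ with isolated-or-divisorial fixed loci generating stabilizers}, and here the induced action on the Jacobian factor is trivial. This case yields the irreducible family $\cA_1\times\mathcal{JH}_{g-2}$ of dimension $2g-4$ --- exactly the $(2g-4)$-dimensional family of the statement. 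Likewise $g(F)=1$ gives the bielliptic Prym locus $\cP_g(\mathcal{RB}_{g,2})$, irreducible of dimension $2g-3$, which cannot be discarded. Consequently your Step 3 is chasing a stratification that does not exist: the case $g(F)=2$ contributes a \emph{single} irreducible $(2g-3)$-dimensional family $\cP_g(\cR\cF_{g,3})$ (irreducibility coming from the irreducibility of the data $(F,L,B,\eta_1)$ on the genus-$2$ base, and the dimension from quasifiniteness of the restricted Prym map via the ramified Prym--Torelli theorem of Naranjo--Ortega), and the intermediate Jacobians of Eckardt cubic threefolds fill up this $9$-dimensional family for $g=6$ rather than a codimension-$1$ sublocus of it. The remaining work the paper does --- and which your plan omits --- is to show the three loci are pairwise distinct in $\cA_{g-1}$, using that only the $g(F)=0$ case produces decomposable polarized Pryms, that bielliptic curves of genus $\ge 4$ are not hyperelliptic, and a singularity-of-the-theta-divisor comparison between the two $(2g-3)$-dimensional families.
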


These families are described in Subsection~\ref{g(F)=0} and Theorems~\ref{RBg2} and \ref{RFg3}, and are shown to be different in Subsection~\ref{different}.
An interesting example is given in the case of dimension $5$ in Subsection~\ref{Eckardt}, where we show that Prym varieties with a pseudoreflection appear naturally as intermediate Jacobians of cubic threefolds with Eckardt points (that is, points such that there are infinitely many lines in the cubic threefold passing through them).
We finish the article by giving a restriction on the possible groups of geometric origin that can act on Prym varieties with smooth quotient; there are at most $5$, and this analysis is done in Section~\ref{listofgroups}.

\subsubsection*{Acknowledgements}
The first author was partially supported by the ANID - Fondecyt grant 1220997 as well as the Math-AmSud grant GV-BCEF 220010.
The second and the third authors were partially supported by the Departament de Recerca i Universitats de la Generalitat de Catalunya (2021 SGR 00697), the Proyecto de Investigaci\'on PID2023-147642NB-100 and the Spanish State Research Agency, through the Severo Ochoa and María de Maeztu Program for Centers and Units of Excellence in R\&D (CEX2020-001084-M).

\section{Preliminaries}
In this section we will review a few basic preliminaries needed for the sequel.

\subsection{Prym varieties}\label{subsec:PrymVar} Let $C$ be a smooth projective curve of genus $g$ and let $\eta\in JC[2]$ be a non-trivial 2-torsion point; $\eta$ induces an \'etale double cover 
\[f:\tC\to C\]
associated to an involution $\iota\in\Aut(\tC)$ such that $f_*\mathcal{O}_{\tC}\cong\mathcal{O}_C\oplus\eta.$ Moreover, $\langle \eta \rangle$ is the kernel of $f^* : JC \rightarrow J\tC $; remember that in the ramified case $f^*$ is injective.
The universal property of the Jacobian variety induces the so-called norm map $\Nm_f:J\tC \rightarrow JC$ which is a surjective map of abelian varieties.
The component of the origin of the kernel of $\Nm_f$ is called the Prym variety of $f$ and is denoted either by $P(f)$ or $P(\tC,C)$.
Note that its dimension is $g-1=g(\tC)-g$.
The restriction of the polarization of $J\tC$ to $P(f)$ is twice a principal polarization.
Hence, denoting by $\cR_g$ the moduli space of isomorphism classes of double coverings $f:\tC\rightarrow C$ as above, we have a map:
\[
\cP_g:\cR_g \to \cA_{g-1}, 
\]
which is usually called the Prym map, and the (closure of the) image is called the Prym locus.
We refer to \cite{do_fibres} for the main properties of $\cP_g$.
In particular, for $g\ge 7$, this map $\cP_g$ is generically injective (but never injective).
For $g=6$ the source and the target have the same dimension and $\cP_6$ is generically finite of degree $27$.
Finally, for $g\le 5$, $\cP_g$ is dominant with positive-dimensional general fiber.
Note that all Prym varieties of dimension $\le 3$ are either Jacobians or products of Jacobians, hence we impose the restriction $\dim P(C,\eta)=g-1\ge 4$, that is $g\ge 5$.

Although we are interested in Prym varieties, our main examples are isogenous to products of abelian varieties obtained via the Prym construction applied to ramified coverings of curves.
Let us consider the moduli space: 
\[
\cR_{g,r}=\{f:D\rightarrow C \mid g(C)=g, f \text{ double covering ramified in } r \text{ points} \}/\cong,
\]
where $D,C$ are smooth irreducible curves, $g\ge 1$, and $r>0$ is an even integer.
This moduli space is also irreducible (see, for instance, \cite[Proposition~2.5]{Bu} for $g\ge 2$, a similar monodromy argument works for $g=1$).

Then, $P(f)$ is defined, as above, as the kernel of the norm map $\Nm_f$ (which now is connected), and the polarization on $JD$ restricts to a polarization of type 
\[
\delta:=(1,\ldots,1,2,\ldots,2),
\]
where $2$ appears $g$ times.
The ramified Prym map attaching $P(f)$ to $f$ is denoted by:
\[
\cP_{g,r}: \cR_{g,r} \to \cA_{g-1+\frac r2}^{\delta}.
\]
It is known that $\cP_{g,r}$ is injective for $r\ge 6$ (see \cite{Ik} and \cite{NO}).

\subsection{Smooth quotients}
Let $G$ be a finite group acting holomorphically on a complex manifold $X$.
An element $\sigma\in G$ is a \textit{pseudoreflection} at $x\in X$ if $\sigma(x)=x$, and the fixed locus of $\sigma$, in a neighborhood of $x$, is of pure codimension 1.
The Chevalley-Shephard-Todd Theorem states that $X/G$ is smooth if and only if for every $x\in X$, $\operatorname{Stab}_G(x)$ is generated by pseudoreflections at $x$.
In particular, if $G\cong\bZ/2\bZ$, then $X/G$ is smooth if and only if the fixed locus of the non-trivial element of $G$ is of pure codimension 1.

In \cite{ALA1}, a characterization was given of smooth quotients of abelian varieties by finite groups that fix the origin, and in \cite{ALA}, it was shown which principally polarized abelian varieties have a smooth quotient such that the group acting also preserves the polarization.
We will revisit this in Section~\ref{listofgroups} in more detail.
Note that if $\sigma$ is an automorphism of an abelian variety that fixes the origin, then each component of the fixed locus of $\sigma$ has the same dimension.
For this reason, it makes sense to talk about a pseudoreflection on the abelian variety, without specifying at which point.

By the results in \cite{ALA1}, we have that any pseudoreflection on an abelian variety is of order $2$, $3$, $4$ or $6$, and in the next section we will show that in the case of the existence of a pseudoreflection on a Prym variety with a geometric origin, it must have order $2$ or $4$.

\section{Prym varieties with an odd prime order pseudoreflection of geometric origin}
\label{sec:pseudOrigin}

Our main goal is to determine families of Prym varieties with a pseudoreflection.
As above $C$ is a projective smooth curve of genus $g$ and $\eta$ is a non-trivial $2$-torsion point in $JC$ which induces a double étale cover $f:\tilde{C}\to C$.
We will start by considering any automorphism $\sigma\in\Aut(C)$ such that $\sigma^*\eta\cong\eta$ and study its behavior; we will later impose the condition that $\sigma$ be a pseudoreflection.

It is well-known that any automorphism that leaves $\eta$ invariant lifts to the cover $\tC$ induced by $\eta $, and therefore $\sigma$ induces an automorphism on $\tC$ that we will denote by the same letter.
We note that this lifting is not unique, since if $\sigma$ is one such lifting, $\iota\sigma$ is another.

First we see that $J\tC=f^*JC+P$ and that $\iota$ acts on $f^*JC$ as $\id$ and on $P$ as $-\id$.
Moreover, since the action of $\sigma$ comes from $C$ and $\sigma$ preserves the polarization of $J\tC$, we have that $\sigma$ acts on $f^*JC$ and $P$ as well.
In particular, we have that $\sigma$ and $\iota$ commute on $J\tC$, and therefore also as automorphisms of $\tC$.

In this section, we will show that automorphisms of odd prime order will never provide a pseudoreflection.
Hence, we are able to reduce the problem to the cases of order $2$ and $4$, which will be the content of the next section.

\begin{lemma}\label{lem:ord2kernel}
For $g\geq 5$, let $Z(\iota)\subseteq\operatorname{Aut}(\tilde{C})$ denote the subgroup of all automorphisms of $\tilde{C}$ that commute with $\iota$. Then the kernel of the natural homomorphism $\rho:Z(\iota)\to\operatorname{Aut}(P)$ has order at most $2$.
Moreover, if $\id\neq \mu \in \ker \rho$, then $g(\tilde C /\langle\mu,\iota\rangle)=g(\tilde C/\iota \mu)\le 1$.
\end{lemma}
\begin{proof}
Let $h$ be the genus of the quotient curve  $\tilde{C}/\ker(\rho )$, then by Riemann-Hurwitz we have that: $2\tilde{g}-2=\left|\ker\rho\right|(2h-2)+r$, hence $\tilde{g}-1=\left|\ker\rho\right|(h-1)+r/2$, where $r$ is the degree of the ramification divisor.
Since $\tilde{g}=2g-1$, we obtain  $2g-2=\left|\ker\rho\right|(h-1)+r/2$.
 
On the other hand, all the elements in  $\ker\rho$ act trivially on $P$,  which implies that $P$ is a factor of the Jacobian of $\tilde{C}/\ker\rho$.
In fact, $P$ is isogenous to the  Prym  variety of the map: $\tilde{C}/\ker(\rho)\to\tilde{C}/\langle\ker\rho,\iota\rangle$.
In particular, $g-1=\dim P\le h$.
Therefore, $2g-2 \ge \left|\ker\rho\right|(g-2)+r/2\geq \left|\ker\rho\right|(g-2)$ and $\left|\ker\rho\right|\le \frac {2g-2}{g-2}=2+\frac{2}{g-2}$.
Since  $g\geq 5$, we obtain $\left|\ker\rho\right|\leq 2$.  

Taking the quotients of $\tC$ by $\iota$, $\iota\mu$ and $\mu$, we get the following commutative diagram:
\begin{equation}\label{diagram0}
 \xymatrix{
&\tC\ar[dl] \ar[d] \ar[dr] &\\
C\ar[dr] & \tC /\mu \ar[d] &\tC / \iota \mu\ar[dl] \\
& \tC /\langle \mu, \iota \rangle&}
\end{equation}
We note from the diagram that the Prym variety of the quotient $\tilde{C}/\iota\mu \to\tilde{C}/\langle\mu,\iota \rangle$ is $0$, since by comparing tangent spaces, this Prym corresponds precisely to the subspace of $T_0(J\tC)$ where $\iota\mu$ acts trivially and $\iota$ acts as $-1$. In other words, it corresponds to the space where $\iota$ and $\mu$ both act as $-1$, which by hypothesis is trivial. This implies that necessarily both curves have the same genus. Since the map is of degree $2$, by Riemann-Hurwitz we conclude that the genus of both curves must be either $0$ or $1$. 
\end{proof}
Note that the map $\rho:Z(\iota)\to \operatorname{Aut}(P)$ can indeed have a non-trivial kernel.
This was shown to us by an anonymous referee who pointed out that if $\tilde{C}$ is hyperelliptic with hyperelliptic involution $j$, then $\iota j$ is an involution of $\tC$, but it acts trivially on the Prym variety.

The previous lemma implies that when we talk about an automorphism of prime odd order $p$ on $P$ of geometric origin, then it actually comes from an automorphism of $C$ of order $p$.
Indeed, if $\sigma\in\operatorname{Aut}(\tC)$ is an automorphism such that $\rho(\sigma)$ is of order $p$ for an odd prime $p$, then the previous lemma implies that $\sigma$ is of order either $p$ or $2p$. In particular, $\sigma^2$ is of order $p$, which implies in particular that the restriction $\rho:\langle\sigma^2\rangle\to\langle\rho(\sigma)\rangle$ is an isomorphism, and so $\rho(\sigma)$ is indeed induced by an element of order $p$.

Let us now assume that $\sigma$ is any automorphism  of prime order $p>2$ on $\tilde{C}$, that commutes with $\iota$. By the previous lemma, we have that $\sigma$ acts on $P$ as an automorphism of order $p$.
In this case, we have that $\tC/\iota\sigma=C/\sigma$ since $\sigma$ is of odd order.
We therefore have the following commutative diagram:
\begin{equation}\label{diagram2}
 \xymatrix{
&\tC\ar[dl]_{\pi}\ar[dd]^{\pi_1}\ar[dr]^{\pi_2}&\\
\tC/\iota=C\ar[dr]_{\epsilon}&&\tC/\sigma\ar[dl]^{\epsilon_2}\\
&\tC/\iota\sigma&}
\end{equation}
 where $\pi_2$ and $\epsilon$ are of degree $p$, and $\pi$ and $\epsilon_2$ are of degree 2.

\begin{lemma}
The morphism $\epsilon_2$ is \'etale.
\end{lemma}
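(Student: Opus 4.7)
My plan is to show that the deck involution of $\epsilon_2$ acts freely on $\tilde{C}/\sigma$. Since $\iota$ and $\sigma$ commute on $\tilde C$, the involution $\iota$ descends to an involution $\bar\iota$ on $\tilde C/\sigma$, and $\epsilon_2$ is exactly the quotient by $\bar\iota$. Hence showing $\epsilon_2$ is étale is equivalent to showing that $\bar\iota$ has no fixed points on $\tilde C/\sigma$.

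The key step is to translate a fixed point of $\bar\iota$ back to $\tilde C$. A point $[x]\in \tilde C/\sigma$ is fixed by $\bar\iota$ if and only if $\iota(x)$ lies in the $\sigma$-orbit of $x$, i.e.\ $\iota(x)=\sigma^i(x)$ for some $i\in\{0,1,\dots,p-1\}$. Since $\iota$ is fixed-point free (as $f$ is étale) the case $i=0$ is excluded, so we may assume $1\le i\le p-1$.

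Now I would push down to $C$ via $f$. Applying $f$ to $\iota(x)=\sigma^i(x)$ and using $f\iota=f$ and $f\sigma=\sigma f$ yields $f(x)=\sigma^i(f(x))$, so $f(x)$ is fixed by $\sigma^i$ in $C$. Since $\sigma$ has prime order $p$ and $1\le i\le p-1$, the element $\sigma^i$ generates $\langle\sigma\rangle$, so $f(x)$ is fixed by $\sigma$ itself. The crucial parity observation is next: $\sigma$ then acts on the two-element fiber $f^{-1}(f(x))=\{x,\iota(x)\}$, giving a group homomorphism $\langle\sigma\rangle\to S_2$; since $|\langle\sigma\rangle|=p$ is odd, this homomorphism is trivial, so $\sigma(x)=x$. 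Combining with $\iota(x)=\sigma^i(x)$ forces $\iota(x)=x$, contradicting the fact that $\iota$ is fixed-point free.

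This contradiction shows $\bar\iota$ acts without fixed points, so $\epsilon_2$ is étale. I don't anticipate a real obstacle here; the only subtle point is invoking the odd order of $\sigma$ to rule out the nontrivial permutation of the fiber $\{x,\iota(x)\}$, which is precisely what makes the ramified case (even $p$) genuinely different.
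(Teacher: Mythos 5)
Your proof is correct and rests on the same key facts as the paper's argument: that $\iota$ is fixed-point free and that the odd order of $\sigma$ forces it to act trivially on the two-point fibers of $f$ over its fixed points. The paper packages this as a count of ramification points of $\pi_1=\epsilon_2\pi_2$ (all of which have stabilizer $\langle\sigma\rangle$ and are therefore absorbed by the degree-$p$ map $\pi_2$), whereas you verify directly that the deck involution $\bar\iota$ of $\epsilon_2$ acts freely on $\tC/\sigma$; this is essentially the same stabilizer analysis, just made pointwise and explicit.
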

\begin{proof}
This comes from analyzing the possible ramification of the morphisms in diagram \eqref{diagram2}.
Indeed, the ramification of $\epsilon\pi$ comes solely from the fixed points of $\sigma$ acting on $C$, and therefore each ramification point of $\pi_1$ on $\tC$ has a stabilizer of order $p$, and $\iota$ just permutes these points.
In particular, if $\epsilon$ ramifies at $t$ points, $\pi_1$ ramifies at $2t$ points.
Since $\pi_1=\epsilon_2\pi_2$ and $\pi_2$ is of degree $p$, we must have that $\epsilon_2$ must be unramified and therefore \'etale.
\end{proof}

Now let us analyze the action of $\sigma$ on $P$.
Note that $\pi_2^*P(\epsilon_2)$ is exactly the abelian subvariety of $J\tC$ where $\iota$ acts as $-1$ and $\sigma$ acts as the identity.
In other words:
\[\pi_2^*P(\epsilon_2)=\Fix_P(\sigma)_0.\]
Let $\gamma $ be the codimension of $\pi_2^*P(\epsilon_2)$ in $P$.
We note that $\sigma$ is a pseudoreflection if and only if $\gamma=1$.
Since $\dim P(\epsilon_2)=g(\tC/\sigma)-g(C/\sigma)$, we have that
\[\gamma +g(\tC/\sigma)-g(C/\sigma)=g-1.\]
By the Riemann Hurwitz formula for $\epsilon_2$, we get that $g(\tC/\sigma)-g(C/\sigma)=g(C/\sigma)-1$, and so
\[g(C/\sigma)=g-\gamma .\]
Now if we plug in Riemann Hurwitz for $\epsilon$, we get that if $\sigma$ acts on $C$ with $t$ fixed points, then
\[
2g-2=p(2g(C/\sigma)-2)+t(p-1)=p(2g-2\gamma -2)+t(p-1),
\]
and therefore:
\begin{equation}\label{genus}
2g(p-1)=2(p-1)-t(p-1)+2p\gamma .
\end{equation}
By equation \eqref{genus}, we necessarily have that $p-1$ divides $2p\gamma $, and therefore $p-1$ must divide $2\gamma$.
From this, we easily obtain the following result:

\begin{proposition}\label{noPseudoHigherOrder} For $g\ge 3$,
any pseudoreflection on $P$ of geometric origin has
order $2$ or $4$. In particular, every Prym variety with a pseudoreflection of geometric origin admits a pseudoreflection of order $2$.
\end{proposition}
\begin{proof}
 If $\sigma$ is a pseudoreflection on $P$ of prime order $p$ for $p\geq 3$, then $\gamma =1$, and $p$ could only be $3$.
 In this case, equation \eqref{genus} becomes $2g=5-t$.
 Since $g\ge 3$, then $t\le 5-2g<0$, which is impossible. 
 
 If $\sigma$ is not of prime order, then the order of $\sigma$ is either $4$ or $6$. If it is of order $6$, then $\sigma^2$ is also a pseudoreflection but of order $3$, which is impossible by the previous argument.

 The last statement follows from noticing that if $\sigma$ is a pseudoreflection of order $4$, then $\sigma^2$ must be a pseudoreflection of order $2$.
 \end{proof}

For automorphisms of order $3$, we also get a general result on the codimension $\gamma$ that will come in handy when we look at the possible groups that can act on Prym varieties with smooth quotients (see Section~\ref{listofgroups}).

\begin{proposition}\label{order3}
If $g\geq 5$ and $\sigma\in\operatorname{Aut}(C)$ has order $3$, then the codimension $\gamma$ of $\Fix_P(\sigma)_0$ in $P$ satisfies $\gamma \geq 3$.
\end{proposition}
\begin{proof}
 Replacing $p=3$ in equation \eqref{genus}, we obtain $2g=2-t+3\gamma$.
 Then $0\le t=2-2g+3\gamma$.
 Since $g\ge 5$, we get $3\gamma \ge 2g-2 \ge 8$.
 Thus $\gamma \ge 3$.
\end{proof}

What is left to prove is that any pseudoreflection of order $2$ on $P$ of geometric origin actually comes from an involution of $C$.
The fact that orders are preserved for involutions that induce a pseudoreflection of the Prym variety follows from the following application of the Lefschetz Fixed Point Formula:

\begin{lemma}\label{lem:pseudoOford2}
Let $\sigma \in Z(\iota)$ with the following properties:
\begin{enumerate}
    \item\label{itema:ps2} as an automorphism of $\widetilde C$ it has order $4$ (i.e. $\sigma ^4=\id$ and $\sigma ^2\neq \id$), so also as an automorphism of $C$;
    \item\label{itemb:ps2} the automorphism $\sigma $ induces a pseudoreflection on $P$;
    \item\label{itemc:ps2} the automorphism $\sigma ^2$ acts trivially on $P$; and
    \item\label{itemd:ps2} the genus of $\tC/\langle \iota,\sigma^2\rangle =C/\sigma^2$ is $0$ or $1$ (hence the same holds for $C/\sigma$).
\end{enumerate}
Then $g\le 3$.
\end{lemma}
\begin{proof}
We consider the action of $\sigma $ in $T_0(J\widetilde C)=H^0(\widetilde C,\omega_{\widetilde C})^*=H^{1,0}(C)^*$.
This tangent space is the product of the tangent of $P$ and the tangent in $JC$.
By property \eqref{itema:ps2} the eigenvalues are $1, -1, \xi, \xi^{-1}$, where $\xi=e^{i\pi/4}$. Since the action comes from an action on the real vector space $H^1(\widetilde C, \mathbb R)$,
we know that there are as many eigenvalues $\xi$ as $\xi^{-1}$ in the action on $H^{1,0}(C)\oplus H^{0,1}(C)$ , we call $a$ this number.
By properties \eqref{itemb:ps2} and \eqref{itemc:ps2} the part of the diagonal matrix corresponding to $T_0(P)$ has $g-2$ times the eigenvalue $1$ and one eigenvalue $-1$.
Property \eqref{itemd:ps2} says that there is at most one eigenvalue which is either $1$  or $-1$.
Consider the case where there is an eigenvalue $\epsilon =\pm 1$.
This is the bielliptic case, the hyperelliptic case is similar.
Note that in this case $2a+1=g$.
Then the trace $T$ of the matrix of the action on $H^1(C,\mathbb C)$ is:  
    \[
    T=2(g-3 +\epsilon +\frac{g-1}2  (\xi + \xi^{-1}))=2(g-3+\epsilon+\frac{g-1}2 \sqrt 2).
    \]
Lefschetz Fixed Point Formula implies that $T\leq 2$.
All together:
    \[
    g-3+\epsilon+\frac{g-1}2 \sqrt 2\le 1.
    \]
    Then, $g (1+ \frac {\sqrt 2}2)\le 4-\epsilon +\frac {\sqrt 2}2\le 5-\epsilon .$ The worst case is $\epsilon =-1$, so $g\le 6/(1+ \frac {\sqrt 2}2)<4$.
\end{proof}

\section{Involutions}
\label{sec:involutions}

By the results in Section~\ref{sec:pseudOrigin}, 
it suffices to analyze involutions when studying moduli of Prym varieties admitting a pseudoreflection of geometric origin.
Throughout this section $\sigma$ will be an involution acting on a curve $C$ of genus $g$.

Since we are interested in cases where a lift of $\sigma$ is a pseudoreflection, we can assume that the lift we denote by $\sigma$ is the pseudoreflection.
Note moreover that if $\sigma$ is a pseudoreflection on $P$ then $\iota\sigma$ is not. Indeed, if $\sigma$ is a pseudoreflection, then its action on the tangent space of $P$ has the eigenvalue $-1$ appear with multiplicity $1$, and the eigenvalue $1$ appears with multiplicity $g-2$. Since $\iota$ acts as $-1$ on $P$, then $\iota\sigma$ has exactly the opposite multiplicities for the same eigenvalues, and therefore cannot be a pseudoreflection if $g\geq 4$.
Hence at most one lifting of $\sigma$ has the chance of having this property.

Taking the quotients of $\tC$ by $\iota$, $\iota\sigma$ and $\sigma$, we get the following commutative diagram:
\begin{equation}\label{diagram}
 \xymatrix{
&\tC\ar[dl]_{\pi}\ar[d]^{\pi_1}\ar[dr]^{\pi_2}&\\
C\ar[dr]_{\epsilon}&C_1\ar[d]^{\epsilon_1}&C_2\ar[dl]^{\epsilon_2}\\
&F&}
\end{equation}
where $\pi_1$ is the quotient by $\iota\sigma$, $\pi_2$ is the quotient by $\sigma$, and the three bottom arrows are the respective morphisms that go to the total quotient $F=\tC/\langle\iota,\sigma\rangle$.
Note that the sum of the branch divisors of $\epsilon_1 $ and $\epsilon_2$ is the branch divisor of $\epsilon $, then by Riemann Hurwitz formula we obtain:
\begin{equation} \label{sum_of_genus}
g(C_1)+g(C_2)=g+2g(F)-1.
\end{equation}

\begin{lemma}\label{involution}
The involution $\sigma$ lifts to a pseudoreflection on $P$ if and only if $g(F)=g(C_1)-1$, in which case $g(C_1)\in\{1,2,3\}$.
\end{lemma}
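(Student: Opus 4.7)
The plan is to decompose $P$ up to isogeny into the two $\sigma$-eigenspaces, identify them with Pryms of the covers $\epsilon_1,\epsilon_2$ of Diagram~\eqref{diagram}, read off the pseudoreflection condition as a dimension statement, and close with Riemann--Hurwitz applied to $\epsilon_1$.

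First I would consider the Klein four-group $\langle\iota,\sigma\rangle\cong(\bZ/2)^2$ acting on $J\tC$ by pullback; its four characters yield an isogeny decomposition of $J\tC$ into isotypic pieces. Since $\iota$ acts as $+\id$ on $\pi^*JC$ and as $-\id$ on $P$, the Prym is isogenous to the sum of the two $(\iota=-1)$-components, giving $P\sim P^+\oplus P^-$ with $P^\pm$ the $\pm1$-eigenspace of $\sigma$ inside $P$. The key observation is that $\pi_1^*JC_1$ is exactly the $(+1)$-eigenspace of $\iota\sigma$ on $J\tC$, so its intersection with the $(-1)$-eigenspace of $\iota$ (namely $P$) coincides with the $(-1,-1)$-isotypic piece, that is, with $P^-$. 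Since both $\iota$ and $\sigma$ descend to the common nontrivial involution on $C_1$ whose quotient is $F$, I would conclude $P^-\sim\pi_1^*P(\epsilon_1)$; symmetrically, $P^+\sim\pi_2^*P(\epsilon_2)$.

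Next, because $\sigma$ is an involution on the abelian variety $P$, the fixed locus $\Fix_P(\sigma)$ is a union of translates of $P^+$, so $\codim_P\Fix_P(\sigma)=\dim P^-$. Therefore $\sigma$ is a pseudoreflection on $P$ precisely when
\[\dim P^-=\dim P(\epsilon_1)=g(C_1)-g(F)=1,\]
which is exactly the condition $g(F)=g(C_1)-1$. For the remaining bounds, assuming $g(F)=g(C_1)-1$, I would apply Riemann--Hurwitz to the degree-two cover $\epsilon_1:C_1\to F$: if it has $r_1$ branch points, then
\[2g(C_1)-2=2(2g(F)-2)+r_1=4(g(C_1)-1)-4+r_1,\]
so $r_1=6-2g(C_1)$. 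The inequality $r_1\geq 0$ forces $g(C_1)\leq 3$, while $g(F)\geq 0$ forces $g(C_1)\geq 1$, giving $g(C_1)\in\{1,2,3\}$.

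The main obstacle I anticipate is the eigenspace identification in the first step: one has to match carefully the $(\bZ/2)^2$-isotypic decomposition of $J\tC$ with the Prym decomposition coming from the bottom row of Diagram~\eqref{diagram}, in particular proving that $P^-$ is isogenous to $\pi_1^*P(\epsilon_1)$ and not to some other piece. Once this is secured, the pseudoreflection criterion and the Riemann--Hurwitz count are routine.
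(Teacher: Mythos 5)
Your proof is correct and follows essentially the same route as the paper: the isotypic decomposition of $J\tC$ under $\langle\iota,\sigma\rangle$ that you spell out is precisely the paper's $\sigma$-equivariant isogeny $P(\epsilon_1)\times P(\epsilon_2)\to P$ (with $\sigma$ acting as $-\id$ on the first factor and $\id$ on the second), and both arguments then reduce the pseudoreflection condition to $\dim P(\epsilon_1)=g(C_1)-g(F)=1$ and finish with Riemann--Hurwitz for $\epsilon_1$. Your version just makes explicit the eigenspace identification that the paper leaves implicit.
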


\begin{proof}
There is a natural map in the previous diagram, since the Prym variety of $\epsilon_1$ (respectively $\epsilon_2 $) is mapped to $P$ by $\pi_1^*$ (respectively $\pi_2^*$):
 \[
 P(\epsilon_1)\times P(\epsilon_2) \to P.
 \]
 This map is an isogeny. Indeed, by \eqref{sum_of_genus} both abelian varieties have the same dimension $g-1=(g(C_1)-g(F))+(g(C_2)-g(F))$. Moreover, arguing as in \cite[Lemma~4.4]{N}, we get that $\pi_1^*(JC_1) \cap \pi_2^*(JC_2)=(\epsilon \circ \pi)^*(JF)$, therefore $\pi_1^*P(\epsilon_1) \cap \pi_2^* P(\epsilon_2)$ is finite. Hence, the map has finite kernel and is an isogeny. 

The involution $\sigma $ commutes with $\iota $ and therefore it acts on both curves $C_1$ and $C_2$, and therefore on the Jacobians $JC_1, JC_2$ and their Prym varieties $P(\epsilon_1), P(\epsilon_2)$.
The isogeny given above is equivariant with respect to these actions, being the identity on $P(\epsilon_2)$.
Hence, $\sigma$ is a pseudoreflection on $P$ if, and only if, $\dim P(\epsilon_1)=1$.
This is equivalent to the condition $g(C_1)=g(F)+1$.
The Riemann Hurwitz formula implies the numerical restrictions of the statement.
\end{proof}

In what follows we will assume that $\sigma$ does lift to a pseudoreflection on $P$, and in particular $g(F)\in\{0,1,2\}$.
We will proceed to analyze the situation based on these three cases.
 
\subsection{The case \texorpdfstring{$g(F)=0$}{g(F)=0}}\label{g(F)=0}
This case has already been considered by Mumford \cite{M}, who proves that $P(\epsilon_1)=JC_1$, $P(\epsilon_2)=JC_2$, and $P\cong JC_1\times JC_2$.
Here $JC_1$ is an elliptic curve and $\sigma$ acts on $P$ as $-1$ on $JC_1$ and as the identity on $JC_2$.
In this case, we therefore have that
\[P/\sigma=\mathbb{P}^1\times JC_2\]
is the trivial $\mathbb{P}^1$-bundle over $JC_2$.
We note moreover that this situation never occurs in the case of quotients of Jacobian varieties, since these are irreducible.

The family we obtain in this way is $\cA_1 \times \mathcal {JH}_{g-2}\subset \cA_{g-1}$, where $\mathcal {JH}_{g-2}$ stands for the locus of hyperelliptic Jacobians of dimension $g-2$.
This family is irreducible of dimension $1+2(g-2)-1=2g-4$.

\begin{rem}\label{products}
We note that this is the only case where the Prym variety is a polarized product.
Indeed, if the theta divisor $\Xi$ has two or more components, then the singular locus $\operatorname{Sing}(\Xi)$ has codimension $2$ in $P$.
According to \cite[Theorem~4.10]{be}, this only happens if $C$ is hyperelliptic (we are assuming that $C$ is smooth, otherwise other possibilities appear).
\end{rem}

\subsection{The case \texorpdfstring{$g(F)=1$}{g(F)=1}}

The Prym map for bielliptic curves is considered in \cite{N}, 
where the general fibers are described for $g\ge 10$.
We introduce some notation: let $\cB_g\subset \cM_g$ denote the moduli space of curves with a bielliptic structure; 
that is, smooth curves $C$ of genus $g$ equipped with a degree $2$ map $C\rightarrow E$ over an elliptic curve.
It is known that $\cB_g$ is irreducible of dimension $2g-2$.
The preimage of $\cB_g$ by the forgetful map $\cR_g\rightarrow \cM_g$ breaks-up into several irreducible components, one of which, $\mathcal {RB}_{g,2}$, 
corresponds to the elements $(C,\eta)$ for which there exists a diagram as in \eqref{diagram} with $g(F)=1, g(C_1)=2$ 
(see \cite[Section~2]{N} for more details, where this component is called $\cR_{B,g,1}$). Our aim is to compute the dimension of $\cP_g(\mathcal {RB}_{g,2})$. To do it, and for later applications, it will be useful to know that generically the abelian variety $P(\epsilon_2)$ is simple.
This is a particular case of the following fact, that may have independent interest and, at the same time, probably well-known by the experts.
\begin{proposition}\label{Prym_simple} 
For $g\geq 2$, the general element of the image of the ramified Prym map
\[
\cP^{}_{g,r}: \cR^{}_{g,r} \to \cA^{\delta }_{g-1+\frac r2}
\]
is simple.
\end{proposition}

\begin{proof}
Observe that the infinitesimal deformations of an element $(F,L,B)\in \cR_{g,r}$ are the same as the infinitesimal deformations of $(F,B)$ (since if $B$ is fixed, then $L$ has finitely many options).
The space of these infinitesimal deformations is $H^1(F,T_F(-B))$, the tangent space to the moduli space of genus $g$ curves with $r$ marked points.
The tangent space to the Prym variety $P\in \cP_{g,r}(\cR_{g,r})$ is $T_0P=H^0(F,\omega_F\otimes L)^*$ and the differential of the Prym map:
\[
H^1(F,T_F(-B)) \to \operatorname{Sym}^2 H^0(F,\omega_F\otimes L)^*
\]
is given by the cup-product:
\[
H^1(F,T_F(-B))\otimes H^0(F,\omega_F \otimes L)\to H^1(F,L^{-1})\cong H^0(F,\omega_F\otimes L)^*.
\]
Since there is a natural polarization on $P$, there is a canonical isomorphism $H^0(F,\omega_F\otimes L)^*\cong \overline{H^0( F,\omega_F\otimes L)}$, and composing with the conjugation map we have that the infinitesimal deformations act on $H^0(F,\omega_F \otimes L)$.
Assume that a general $P$ has a proper subabelian variety.
Thus, by a standard argument, there exists a proper subspace $(0)\neq W \subsetneq H^0(F,\omega_F \otimes L)$ such that $\epsilon \cdot W \subset W$, for all $\epsilon \in H^1(F,T_F (-B))$.
Note that $W$ is the tangent space at the origin of the quotient of $P$ by the proper abelian subvariety.

Let $s \in H^0(F,\omega_F \otimes L)$ be a section, and consider the map:
\[
m_{s}:H^1(F,T_F (-B)) \to H^0(F,\omega_F \otimes L)^*, \quad \epsilon \mapsto \epsilon \cdot s.
\]
The dual of this map is multiplication by $s$:
\[
H^0(F,\omega_F \otimes L) \to H^0(F,\omega_F ^2(B)) 
\]
which is obviously injective.
Hence, $m_{s}$ is surjective and, by the invariance of $W$ by deformations, all the sections of $H^0(F,\omega_F \otimes L)$ belong to $W$, a contradiction.
\end{proof}

The main result of this section is the following:

\begin{theorem}\label{RBg2}
Assume $g\ge 5$. Then the bielliptic Prym locus $\cP_g(\mathcal {RB}_{g,2})$ gives an irreducible $(2g-3)$-dimensional family of Prym varieties admitting a pseudoreflection involution.
\end{theorem}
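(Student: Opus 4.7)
The plan verifies three claims about $\cP_g(\mathcal{RB}_{g,2})\subset\cA_{g-1}$: (a) every point carries a pseudoreflection of geometric origin, (b) the image is irreducible, and (c) its dimension is $2g-3$. Items (a) and (b) come immediately from earlier material: for any $(C,\eta)\in\mathcal{RB}_{g,2}$, the associated diagram \eqref{diagram} satisfies $g(F)=1$ and $g(C_1)=2$, so $g(C_1)=g(F)+1$ and \autoref{involution} produces the pseudoreflection on $P(\tilde{C},C)$; while $\mathcal{RB}_{g,2}$ is irreducible by \cite{N}, and its image under the morphism $\cP_g$ inherits irreducibility.

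For item (c) I would argue as follows. Since the forgetful map $\cR_g\to\mathcal{M}_g$ is finite \'etale and $\dim\mathcal{B}_g=2g-2$, we have $\dim\mathcal{RB}_{g,2}=2g-2$, so it suffices to show that the generic fiber of $\cP_g|_{\mathcal{RB}_{g,2}}$ has dimension exactly~$1$. The key tool is the isogeny decomposition $E\times A\to P$ with $E:=P(\epsilon_1)$ an elliptic curve and $A:=P(\epsilon_2)$ a polarized abelian variety of dimension $g-2$. The factor $A$ is the ramified Prym of $C_2\to F$ with $2g-4$ branch points, i.e., $A\in\cP_{1,2g-4}(\cR_{1,2g-4})$. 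Since $g\ge 5$ forces $r=2g-4\ge 6$, the injectivity of $\cP_{1,2g-4}$ from \cite{Ik, NO} recovers $C_2\to F$ uniquely from $A$, and hence determines the elliptic target $F$.

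The main obstacle is the fiber computation. The plan is to establish both bounds on the generic fiber dimension. For the upper bound (fiber at most $1$-dimensional), once $F$ and $A$ have been extracted from $P$ as above, the residual data specifying $(C,\eta)\in\mathcal{RB}_{g,2}$ reduces to the choice of the double cover $C_1\to F$ (two branch points on $F$, together with a compatible square root), which gives an at most one-parameter freedom after imposing the polarization compatibility coming from the isogeny $E\times A\to P$ and the fact that $E$ is cut out inside $P$ by the pseudoreflection. For the lower bound (fiber at least $1$-dimensional), one exhibits a genuine one-parameter deformation of $(C,\eta)$ within $\mathcal{RB}_{g,2}$ keeping the polarized Prym $P$ fixed, following the template used in \cite{N} to describe the generic fibers of the bielliptic Prym map for $g\ge 10$ and extending it to all $g\ge 5$. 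Combining the two bounds yields generic fiber dimension $1$ and hence $\dim\cP_g(\mathcal{RB}_{g,2})=2g-3$, completing the proof.
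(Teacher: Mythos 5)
Your items (a), (b) and the \emph{upper} bound on the generic fiber in (c) follow essentially the same route as the paper: there are finitely many pseudoreflections on $(P,\Xi)$, the covering $C_2\to F$ is recovered from $A=\Fix_P(\sigma)_0$ via the injectivity of $\cP_{1,2g-4}$ for $2g-4\ge 6$, the elliptic curve $E=P(\epsilon_1)$ is recovered as the neutral component of the locus where $\sigma$ acts as $-\id$, and the only residual freedom is the choice of $\epsilon_1:C_1\to F$ with prescribed Prym $E$. The paper makes your phrase ``at most one-parameter freedom after imposing the polarization compatibility'' precise by noting that the admissible $\epsilon_1$ lie in a fiber of the dominant map $\cP_{1,2}:\cR_{1,2}\to\cA_1$, and a dominant map from an irreducible surface onto a curve has no $2$-dimensional fibers; you should include some such justification, since a priori the two branch points plus the square root give a $2$-parameter family.

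The genuine gap is your \emph{lower} bound, i.e.\ the claim that the generic fiber is at least $1$-dimensional; without it you only obtain $\dim\cP_g(\mathcal{RB}_{g,2})\ge 2g-3$. You propose to ``exhibit a one-parameter deformation \ldots following the template used in \cite{N} \ldots and extending it to all $g\ge 5$'', but that template is the analysis of the singularities of the theta divisor, which is precisely the argument the paper points out is only available for $g\ge 10$ and which its proof is designed to circumvent; as written, this step is a placeholder for the hard part rather than a proof. The paper obtains the lower bound cheaply: for $g\ge 10$ it invokes Donagi's tetragonal construction (a bielliptic curve carries a $1$-dimensional family of $g^1_4$'s, each producing coverings with the same Prym), and in general it uses that every fiber of $\cP_{1,2}$ is exactly $1$-dimensional together with the fact that the fiber product of $\epsilon_1$ and $\epsilon_2$ reconstructs the whole diagram \eqref{diagram}, so the $1$-parameter family of admissible $\epsilon_1$'s genuinely produces a $1$-parameter family of preimages of $(P,\Xi)$. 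Substituting either of these mechanisms for your deformation step would complete the argument.
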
 
\begin{proof}
In \cite[Theorem~8.7]{N} it is proved, under the hypothesis $g\ge 10$, that the general fiber of the Prym map restricted to $\mathcal {RB}_{g,2}$ is $1$-dimensional and a complete description of the fiber is given.
This result is optimal since the curve $C$ has a $1$-dimensional family of tetragonal series $g^1_4$ and Donagi's tetragonal construction (see \cite{do_fibres}) implies that the general fiber is at least $1$-dimensional.
In particular, the statement of the theorem follows for $g\ge 10$.
The proof in \cite{N} is based on the analysis of the singularities of the theta divisor of $P$ combined with some ``translation invariant" properties of these loci.
Our aim is to prove that the general fiber has dimension $1$ for $g\ge 5$, maybe with several components.

Let $P=P(\tC,C)$ be an element in $\cP_g(\mathcal {RB}_{g,2})$, where $\tC, C, C_1, C_2,\ldots$ are as in diagram \eqref{diagram}, and let $\pi ' \colon \widetilde D \rightarrow D$ be another double covering in $\mathcal {RB}_{g,2}$ such that $P=P(\widetilde D,D)$. 
We use similar notation: $D_1, D_2, F', \epsilon', \epsilon_1 ', \epsilon_2' ,\ldots$ for the corresponding diagram.
On $P$ we have two pseudoreflections coming from the two representations of $P$ as the Prym variety of $(\tC,C)$ and $(\widetilde D,D)$.
By \autoref{Prym_simple}, we can assume that $P(\epsilon_2)$ is simple and therefore is the only codimension $1$ abelian subvariety of $P$. Therefore $P(\epsilon_2)=P(\epsilon_2')$. 
Observe that the maps $\epsilon_2, \epsilon_2'$ are double coverings of an elliptic curve ramifying in $2g-4$ points, thus they belong to the image of the ramified Prym map $\cP_{1, 2g-4}$.
Since we assume that $g\ge 5$, we have that $2g-4\ge 6$ and by the Torelli Theorem for ramified Prym maps (see \cite{NO}) the coverings $\epsilon_2$ and $\epsilon_2'$ are isomorphic.
In particular, $F=F'$, and up to a finite number of choices (the number of automorphisms of $C_2$ is finite) we can assume that $\epsilon _2=\epsilon_2'$.

Moreover, $P(\epsilon_1)$ is determined by the embedding $P(\epsilon_2)\subset P$. Indeed, it is the so-called polarized complementary (see \cite[Section~ 5.3]{bl}) of $P(\epsilon_2)$ in $P$. Hence $P(\epsilon_1)=P(\epsilon_1')$.

Therefore, we have to consider the fiber at $P(\epsilon_1)$ of the ramified Prym map $\cP_{1,2}:\cR_{1,2}\rightarrow \cA_1$ restricted to the set: 
\[\cY_F=\set{(F,\eta, x_1+x_2) \mid \eta \in \Pic^1(F), x_1+x_2 \in \vert \eta^2 \vert, x_1 \neq x_2 },\]
where $F$ is our fixed elliptic curve.
An easy computation with tangent spaces shows that $\cP_{1,2}$ restricted to $\cY_F$ is a dominant map of curves, hence, up to a finite number of choices we can assume that $\epsilon _1=\epsilon_1'$.

The pair of coverings $(\epsilon_1,\epsilon_2)$ does not determine completely the pair $(\tC,C)$, since $\epsilon_1$ can be composed with an automorphism of $F$ before doing the fiber product over $F$.
In other words, there is a $1$-dimensional family of coverings in $\mathcal {RB}_{g,2}$ that share the coverings $(\epsilon_1,\epsilon_2)$. As explained in detail in \cite[Subsection~(2.10)]{N} this is exactly the effect of the tetragonal construction and implies that the fiber is $1$-dimensional.
\end{proof}

\subsection{The case \texorpdfstring{$g(F)=2$}{g(F)=2}}\label{sec:g=2}
Using \eqref{sum_of_genus} and assuming $g(F)=2$ and $g(C_1)=3$, we obtain that $g(C_2)=g(C)=g$ and the branch divisor $B$ of $\epsilon$ has degree $2g-6$.
Note that by Riemann Hurwitz formula in this case $\pi_2$ is \'etale.
It is easy to completely describe the diagram starting from some data given on $F$.
Indeed, assume that the map $\epsilon$ is given by the data $(F,L,B)$, where $L\in \Pic^{g-3}(F)$ and $B$, the branch divisor of $\epsilon$, is a reduced divisor in the linear system $\vert L^2 \vert$.
Let $\eta_1\in JF[2]$ be the 2-torsion point in $JF$ defining the étale map $\epsilon_1$.
Then $(F,L,B,\eta_1)$ determines and is determined by the diagram \eqref{diagram} as a result of the following lemma.

\begin{lemma}\label{lemma_g=2}
 With the above notation, the following holds:
 \begin{enumerate}
 \item\label{item:a:g=2} We have $\epsilon^* \eta_1\cong \eta$.
 \item\label{item:b:g=2} The map $\epsilon_2$ is given by the data $(F, L\otimes \eta_1,B)$.
 In particular, the branch divisor of $\epsilon$ and $\epsilon_2$ is the same.
 \item\label{item:c:g=2} The curve $\tC$ is the fiber product of $\epsilon$ and $\epsilon_1 $ (respectively $\epsilon_1$ and $\epsilon_2$).
 \item\label{item:d:g=2} The map $\iota $ defines a pseudoreflection on the Prym variety $P(\pi_2)$ and $\sigma $ defines a pseudoreflection on $P(\pi)$.
 \end{enumerate}
\end{lemma}
\begin{proof}
To prove \eqref{item:a:g=2} observe that 
\[\cO_{\tC}=\pi_1^*\epsilon_1 ^*\eta_1=\pi^* \epsilon ^* \eta_1 .\]
Hence $\epsilon^* \eta_1$ belongs to the kernel of $\pi^*$, which is generated by $\eta$. We only have to exclude that $\epsilon^* \eta_1 $ is trivial.
This follows from the injectivity of $\epsilon ^*$.

The branch divisor of $\pi \circ \epsilon =\pi_2\circ \epsilon_2$ is $B$. Since $\pi_2 $ is \'etale, also the branch divisor of $\epsilon_2$ is $B$. Hence, $\epsilon_2$ is determined by the data $(F,M,B)$, where $M$ is a line bundle on $F$ such that $B\in \vert M^2\vert$. In particular $M^2\cong L^2\cong \cO_F(B)$, thus $M\cong L\otimes \alpha $ for some $2$-torsion point in $JF$. To finish the proof of \eqref{item:b:g=2} we have to show that $\alpha = \eta_1 $. Note that $\alpha $ can not be trivial, otherwise $C_2=C, \pi_2 = \pi $, $\iota = \sigma $ and therefore $C_1 = \tC /\iota \sigma =\tC $. This is impossible since $g(C_1)=3$. By the basic properties of double covers we have:
$$
\omega_{\tC}=\pi_2^* \omega_{C_2} =\pi_2^* \epsilon_2^* L\otimes \alpha =
\pi^* \epsilon^* L\otimes \alpha = \omega_{\tC}\otimes \pi^* \epsilon ^* \alpha.
$$
Thus $\epsilon^* \alpha =\eta=\epsilon^* \eta_1 $ and then $\alpha=\eta_1$.

Note that \eqref{item:c:g=2} is a consequence of general properties of the fiber product and \eqref{item:d:g=2} follows from the previous discussions, since $\dim \pi_2^*P(\epsilon_2)=\dim P(\pi)-1$ and $\dim \pi^*P(\epsilon)=\dim P(\pi_2)-1$. 
\end{proof}
Now let $\cF_g\subseteq\mathcal{M}_g$ denote the locus of genus $g$ curves that are double covers of a genus 2 curve, branched over $2g-6$ points, let $\cR\cF_{g}$ denote its preimage in $\cR_g$ via the forgetful map, and let $\cR\cF_{g,3}$ denote the locus of all elements in $\cR\cF_g$ such that the quotients of $\tC$ by each lifting of the automorphism $\sigma$ are of genus $3$ and $g$, respectively.

\begin{lemma}\label{lem:Fg} Assume $g\geq 5$.
Then we have the following:
\begin{enumerate}
\item $\cF_g$ is irreducible of dimension $2g-3$, and the automorphism group of a general element of $\cF_g$ is $\bZ/2\bZ$.
\item $\cR\cF_{g,3}$ is irreducible.
\end{enumerate}
\end{lemma}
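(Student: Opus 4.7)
The plan is to parametrize both loci via the data of \autoref{lemma_g=2} and reduce irreducibility to a monodromy computation for the $2$-torsion of the genus-$2$ base.

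For part (a), I would work with
\[
\mathcal{Y}_g := \{(F, L, B) \mid F\in\mathcal{M}_2,\ L\in\Pic^{g-3}(F),\ B\in|L^2|\text{ reduced}\}/\cong.
\]
Since $g\geq 5$ gives $\deg L^2=2g-6>2g(F)-2$, Riemann--Roch yields $h^0(L^2)=2g-7$, so the fibre of $\mathcal{Y}_g\to\mathcal{M}_2$ over $F$ is an open subset of a $\mathbb{P}^{2g-8}$-bundle over $\Pic^{g-3}(F)$. Combined with the irreducibility of $\mathcal{M}_2$, this makes $\mathcal{Y}_g$ irreducible of dimension $3+2+(2g-8)=2g-3$. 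The natural map $\mathcal{Y}_g\to\cF_g$ sending $(F,L,B)$ to the double cover of $F$ defined by $L$ and a section of $L^2$ cutting out $B$ is surjective and generically finite, so $\cF_g$ is irreducible of dimension $2g-3$. For the generic automorphism group, I would argue that the involution $\sigma$ realizing $C\to F$ is unique for a generic $C\in\cF_g$, because the locus where $C$ admits two such involutions is parametrized by moduli of curves with a larger group action and so has dimension strictly less than $2g-3$. Hence every $\tau\in\Aut(C)$ normalizes $\langle\sigma\rangle$ and descends to $\Aut(F)$; for generic $(F,B)$ the group $\Aut(F)$ is generated by the hyperelliptic involution $h$, and $B$ is not $h$-invariant, so the descent is trivial and $\tau\in\langle\sigma\rangle$.

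For part (b), I would set
\[
\mathcal{Z}_g := \{(F,L,B,\eta_1) \mid (F,L,B)\in\mathcal{Y}_g,\ 0\neq\eta_1\in JF[2]\}/\cong
\]
and observe that $(F,L,B,\eta_1)\mapsto(C,\epsilon^*\eta_1)$ is well-defined by \autoref{lemma_g=2} and surjects onto $\cR\cF_{g,3}$, because any pseudoreflection lift of $\sigma$ forces $\eta=\epsilon^*\eta_1$ for some nonzero $\eta_1\in JF[2]$ as discussed in Section~\ref{sec:involutions}. The forgetful map $\mathcal{Z}_g\to\mathcal{Y}_g$ is a finite étale cover of degree $15$, so irreducibility of $\mathcal{Z}_g$ reduces to its connectedness. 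Since the fibres of $\mathcal{Y}_g\to\mathcal{M}_2$ are connected, the monodromy of the local system $JF[2]$ on $\mathcal{Y}_g$ agrees with its monodromy over $\mathcal{M}_2$, which is the full symplectic group $\operatorname{Sp}(4,\mathbb{F}_2)$. Since this group acts transitively on the $15$ non-zero elements of $\mathbb{F}_2^4$, the cover $\mathcal{Z}_g$ is connected and hence irreducible, and so is its image $\cR\cF_{g,3}$.

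The main obstacle will be the uniqueness of $\sigma$ for a generic $C\in\cF_g$ used in part (a): one must rule out curves in $\cF_g$ admitting a second involution with a genus-$2$ quotient, showing that these live in a proper closed subfamily. This requires a short dimension comparison against the loci in $\mathcal{M}_g$ of curves whose automorphism group strictly enlarges $\langle\sigma\rangle$ in a way producing two independent genus-$2$ quotient maps, each of which imposes genuine constraints on the base data $(F,B)$.
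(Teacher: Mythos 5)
Your dimension count for $\cF_g$ ($3+2+(2g-8)=2g-3$, via $h^0(F,L^2)=2g-7$) coincides with the computation the paper performs for the fibre $H_F$, and your part (b) is correct but runs along a genuinely different track: the paper fibres $\mathcal T=\{(F,L,B,\eta_1)\}$ over $\cR_2$ via $(F,L,B,\eta_1)\mapsto (F,\eta_1)$ and concludes from the irreducibility of $\cR_2$ together with the irreducibility and constant dimension of the fibres $H_F$, whereas you fibre over $\{(F,L,B)\}$ and invoke the transitivity of the $\operatorname{Sp}(4,\mathbb F_2)$-monodromy on the $15$ nonzero points of $JF[2]$. These are morally the same argument (the irreducibility of $\cR_2$ is itself proved by that monodromy computation), but the paper's packaging lets it avoid discussing the surjectivity of $\pi_1(\mathcal Y_g)\to\pi_1(\mathcal M_2)$, which your version does need (it holds because the fibres of $\mathcal Y_g\to\mathcal M_2$ are irreducible, but you should say so).

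The one genuine gap is in part (a): the paper does not prove the statement about the generic automorphism group at all, but cites the analysis after Remark~1 on p.~138 of \cite{Cornalba} together with Theorem~1 of that paper, while you reduce it to the claim that a generic $C\in\cF_g$ admits a \emph{unique} involution with genus-$2$ quotient and then leave that claim as ``a short dimension comparison'' without carrying it out. Since this is the only nontrivial content of the assertion $\Aut(C)\cong\bZ/2\bZ$, you must either do the count or cite Cornalba. The count does close: if $C$ carried two such involutions $\sigma_1\neq\sigma_2$, they generate a group $G$ of order at least $4$, and the relation $g+2g(C/G)=g(C/\sigma_1)+g(C/\sigma_2)+g(C/\sigma_1\sigma_2)$ together with Riemann--Hurwitz forces $g(C/G)\le 1$ and bounds the dimension $3g(C/G)-3+b$ of the corresponding locus by $g$, which is less than $2g-3$ for $g\ge 5$; a similar estimate handles larger dihedral groups. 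Your remaining steps (descent of any $\tau\in\Aut(C)$ to $\Aut(F)=\langle h\rangle$ and the non-$h$-invariance of $B$ for generic $(L,B)$, since $h^*L^2\cong L^2$ is the proper closed condition $L^4\cong\omega_F^{2g-6}$) are fine once uniqueness of $\sigma$ is in place.
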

\begin{proof}
The first statement follows from the analysis done after Remark 1 in Page 138 of \cite{Cornalba}, as well as from Theorem~1 of the same article.

To prove the second statement, we use the notation and results of \autoref{lemma_g=2}.
In particular, it is enough to prove that $\cT:=\{(F,L,B,\eta_1)\}/_{\cong}$ is irreducible, since $\cT$ dominates $\cR\cF_{g,3}$.
We consider the forgetful map $h:\cT \rightarrow \cR_2$ sending $(F,L,B,\eta_1)$ to $(F,\eta_1)$.
Note that, for any $(F,\eta_1)$, the fiber $h^{-1}(F,\eta_1)$ is isomorphic to the fiber of the forgetful map $\cR_{2,2g-6}\rightarrow \cM_2$.
Let us see that this fiber, that we denote by $H_F$, is irreducible of constant dimension.
By definition, we have:
\[
H_F=\set{(L,B)\in \Pic^{g-3}(F)\times (F^{(2g-6)}\setminus \Delta) \mid B\in \vert L^2 \vert },
\]
where $\Delta $ stands for the main diagonal.
The projection map $H_F\rightarrow \Pic^{g-3}(F)$ is surjective since $g\ge 5$ and $g(F)=2$.
Moreover, for any $L\in \Pic^{g-3}(F)$, $h^0(F,L^2)=h^0(F,\omega_F\otimes L^{-2})+2g-6+1-2=2g-7$.
Therefore, $H_F$ has a map on a Jacobian of dimension $2$ whose fibers are open sets in a projective space of dimension $2g-8$.
This finishes the proof of the statement.
\end{proof}

\begin{theorem}\label{RFg3}
Assume $g\geq 5$.
Then the Prym locus $\cP_g(\cR\cF_{g,3})$ gives a $(2g-3)$-dimensional family of Prym varieties with a pseudoreflection.
Moreover, the restriction of the Prym map $\cP_g|_{\cR\cF_{g,3}}$ has finite fibers for $g\geq6$, and is generically finite for $g=5$.
\end{theorem}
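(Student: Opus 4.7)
The plan is to combine the irreducibility and dimension count of \autoref{lem:Fg} with the isogeny splitting furnished by \autoref{involution} and \autoref{lemma_g=2}, and then to compare the fibers of $\cP_g|_{\cR\cF_{g,3}}$ with those of the ramified Prym map $\cP_{2,2g-6}$ recalled in the preliminaries. By \autoref{lem:Fg}, $\cR\cF_{g,3}$ is irreducible of dimension $2g-3$, so its image in $\cA_{g-1}$ is automatically irreducible of dimension at most $2g-3$. Since each point of the image is a Prym admitting the pseudoreflection $\sigma$ (by construction and \autoref{involution}), the equality of dimensions is equivalent to generic finiteness of $\cP_g|_{\cR\cF_{g,3}}$, and the theorem is reduced to the two assertions about fibers.

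The key input for the fiber analysis is the isogeny $E\times A\to P$ supplied by \autoref{involution}, where $E:=P(\epsilon_1)$ is an elliptic curve (since $g(C_1)-g(F)=3-2=1$) and $A:=P(\epsilon_2)$ is a polarized abelian variety of dimension $g-2$ and polarization type $(1,\dots,1,2,2)$. Both $E$ and $A$ are recovered intrinsically from $(P,\sigma)$ as the neutral components of the $(-1)$-eigenspace and of the fixed locus of $\sigma$, respectively. The polarized $A$ belongs to the image of the ramified Prym map $\cP_{2,2g-6}$, which is \emph{injective} as soon as $2g-6\geq 6$. Therefore, for $g\geq 6$ the abelian variety $A$ pins down $\epsilon_2:C_2\to F$, hence $F$ together with the line bundle $L\otimes\eta_1$, uniquely; the $2$-torsion point $\eta_1\in JF[2]\setminus\{0\}$ then ranges over at most $15$ values, each of which determines $L$ and so the full datum $(F,L,B,\eta_1)\in\cR\cF_{g,3}$. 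Hence every fiber of $\cP_g|_{\cR\cF_{g,3}}$ is finite in this range.

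For $g=5$, the dimensions give $\dim\cR_{2,4}=7>6=\dim\cA_3^{(1,2,2)}$, so the generic fiber of $\cP_{2,4}$ is one-dimensional and $A$ no longer determines $\epsilon_2$. To salvage generic finiteness of $\cP_5|_{\cR\cF_{5,3}}$ I would introduce the refinement
\[\Phi:\cR\cF_{5,3}\longrightarrow\cA_3^{(1,2,2)}\times\cA_1,\qquad (F,L,B,\eta_1)\longmapsto(A,E),\]
whose source and target both have dimension $7$. Since the principally polarized $P$ is determined by the pair $(A,E)$ up to a finite set of compatible isogeny kernels, generic finiteness of $\cP_5|_{\cR\cF_{5,3}}$ is equivalent to dominance of $\Phi$. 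The first coordinate of $\Phi$ factors through $\cP_{2,4}$ and fills out its $6$-dimensional image, so dominance of $\Phi$ reduces to the assertion that, for a general $A$ in that image, as $(F,L,B)$ varies in the one-parameter fiber $\cP_{2,4}^{-1}(A)$ and $\eta_1$ through one of its $15$ sheets, the elliptic curve $E$ varies non-trivially. This non-constancy is what I expect to be the main obstacle: the étale Prym map $\cR_2\to\cA_1$ is dominant with $2$-dimensional fibers, and one has to rule out that the entire one-parameter family of genus-$2$ curves traced out by $\cP_{2,4}^{-1}(A)$ is contained in a single such fiber. I would settle it by an infinitesimal computation — showing that at a general point of $\cR\cF_{5,3}$ the differential of $(F,L,B,\eta_1)\mapsto E$ does not vanish on the tangent direction to $\cP_{2,4}^{-1}(A)$ — or, more conceptually, by a monodromy/specialization argument on the total family of $\cP_{2,4}$. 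The qualifier ``only'' in the statement corresponds to the existence of a proper sublocus where this coincidence does occur, producing positive-dimensional fibers over special $P$ and preventing finiteness everywhere.
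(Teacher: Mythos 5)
Your handling of $g\ge 6$ is correct and is essentially the paper's argument: fix one of the finitely many pseudoreflections $\sigma\in\Aut(P,\Xi)$, recover $A=\Fix_P(\sigma)_0=P(\epsilon_2)$, invoke the injectivity of the ramified Prym map $\cP_{2,2g-6}$ (valid precisely because $2g-6\ge 6$) to recover $\epsilon_2:C_2\to F$, and then enumerate the finitely many $\eta_1\in JF[2]$ and rebuild $\tC$ as a fiber product. Your diagnosis of why $g=5$ is special is also accurate and, in fact, more careful than the paper's own phrasing: the relevant map is $\cP_{2,4}:\cR_{2,4}\to\cA_3^{(1,2,2)}$, whose source has dimension $7$ and whose target has dimension $6$, so its generic fiber is $1$-dimensional and $A$ alone cannot determine $\epsilon_2$; replacing $A$ by the pair $(A,E)$ is the right move.

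The genuine gap is that for $g=5$ you stop exactly at the point where something still has to be proved. You correctly reduce generic finiteness to the dominance of $\Phi:(F,L,B,\eta_1)\mapsto(A,E)$, i.e.\ to the non-constancy of the elliptic curve $E=P(\epsilon_1)$ as $(F,L,B)$ moves along a general $1$-dimensional fiber of $\cP_{2,4}$ and $\eta_1$ runs over the $15$ nontrivial $2$-torsion points of $JF$ --- and then you explicitly defer this (``I would settle it by an infinitesimal computation \dots or a monodromy/specialization argument'') without carrying out either. Since the entire content of the $g=5$ clause is this non-constancy, the statement is not proved for $g=5$ as written; a priori the whole fiber of $\cP_{2,4}$ through a general point could lie in a single fiber of the dominant map $\cR_2\to\cA_1$, which is exactly the scenario you would need to exclude. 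The same applies to the word ``only'': asserting that positive-dimensional fibers ``correspond to'' a sublocus where the coincidence occurs is not the same as exhibiting one (the paper leans on the description of the fibers of the low-ramification Prym maps in \cite{NO} for this). To close the argument you must either import that fiber description or actually perform the infinitesimal/monodromy computation you sketch.
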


\begin{proof}
For $g\geq 6$ we need to prove that $\cP_g|_{\cR\cF_{g,3}}$ is quasifinite.
For this, let $(P,\Xi)\in \cP_g(\cR\cF_{g,3})$.
Since $\Aut(P,\Xi)$ is a finite group, in particular there are finitely many pseudoreflections.
If we take one pseudoreflection $\sigma\in\Aut(A,\Xi)$, then we want to show that there are finitely many pairs $(C,\eta)\in\cR\cF_{g,3}$ with an involution that induce $\sigma$.

If $(C,\eta)\in\cR\cF_{g,3}$ is a preimage of $(P,\Xi)$, then define $Z:=\Fix_P(\sigma)_0=\pi_2^*P(\epsilon_2)$ where $\epsilon_2:\tC/\sigma\to C/\sigma$ is the natural map as before. The polarization on $P$ induces on $Z$ (twice) the same polarization of the polarization induced by 
that of $\pi_2^*(JC_2)$ which is of type $(1,\ldots,1,2)$. 
Let $\widetilde {\mathcal A}:=\widetilde {\mathcal {A}}_{g-1}^{(1,\ldots,1,2,2)}$ be the moduli space of triples $(A,L,\alpha)$ with $\alpha $ a $2$-torsion point of $A$ in the kernel of $\varphi_L:A\to A^{\vee }$. The following map
\[
\widetilde {\mathcal A}\longrightarrow 
\mathcal {A}_{g-1}^{(1,\ldots,1,2)}, 
\]
sending $(A,L,\alpha)$ to the quotient $A/\alpha $ with its natural polarization. Then, up to a finite number of choices we recover $P(\epsilon_2)$.

Now by Riemann Hurwitz, $\iota$ acts on $\tC/\sigma$ with $2g-6\geq 4$ fixed points.
If $g\geq6$, by \cite[Theorem~1.1]{NO}, $\epsilon_2$ is the unique ramified double cover whose Prym variety is equal to $(Z,\Xi|_{Z})$.
Therefore, given $(P,\Xi,\sigma)$, we can recover $\epsilon_2$.
On the other hand, the \'etale cover $\tC/\iota\sigma\to C/\sigma$ is given by a non-trivial 2-torsion point $\eta_1\in J(C/\sigma)[2]$, of which there are finitely many.
This implies that $\tC=(\tC/\iota\sigma)\times_{C/\sigma}\tC/\sigma$, has finitely many possibilities once we fix the pseudoreflection $\sigma$.
Moreover, we see that once we define $\tC$, $\iota$ is immediately defined as the lifting of the involution associated to the \'etale double cover $\tC/\iota\sigma\to C/\sigma$.
We conclude that $\cP_g|_{\cR\cF_{g,3}}$ is quasifinite.

For $g=5$, the same analysis holds, but the ramified Prym map \[\cP_{5,4}:\cR_{5,4}\to\cA_{6}^{(1,2,2,2,2,2)}\]
is only generically injective, and by \cite{NO}, it actually has fibers of positive dimension.
\end{proof}

\begin{corollary}
The degree of the general fiber of $\cP_g|_{\cR\cF_{g,3}}$ is at most $15\cdot 2^{2g-4}$.
\end{corollary}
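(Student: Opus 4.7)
The plan is to show that, for a generic $(P,\Xi)$ in the image, every preimage in $\cR\cF_{g,3}$ is determined by a pair $(\sigma,\eta_1)$, where $\sigma$ is a pseudoreflection of geometric origin on $(P,\Xi)$ and $\eta_1\in JF[2]\setminus\{0\}$ is a nontrivial $2$-torsion point on the genus-$2$ base curve $F=C/\sigma$. Since $|JF[2]\setminus\{0\}|=15$, the bound will follow provided the pseudoreflection $\sigma$ is unique and the assignment $(\sigma,\eta_1)\mapsto(C,\eta)$ is at most $1$-to-$1$.

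The first step, and the main point, is to prove the uniqueness of $\sigma$. The natural map $\cR\cF_{g,3}\to \cR_{2,2g-6}$ sending $(C,\eta)$ to $\epsilon_2$ is $15$-to-$1$ by the proof of \autoref{lem:Fg}, and in particular dominant, so a generic preimage of $(P,\Xi)$ comes from a generic $\epsilon_2\in\cR_{2,2g-6}$. By \autoref{Prym_simple}, the Prym $A:=P(\epsilon_2)$ is then simple of dimension $g-2\ge 3$. In the isogeny decomposition $P\sim E\times A$ provided by \autoref{involution}, the elliptic factor $E=P(\epsilon_1)$ and the simple factor $A$ of dimension at least $3$ are thus non-isogenous, so every endomorphism of $P$ is block-diagonal; since any involution of a simple abelian variety is $\pm\id$, the polarized involutions of $(P,\Xi)$ are exactly the four elements $(\pm 1,\pm 1)$. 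Among these, only $(-1,1)$ has fixed locus of codimension $1$ (namely $A$, of codimension $\dim E=1$), while $(1,-1)$ has fixed locus of codimension $g-2>1$. Hence $\sigma$ is the unique pseudoreflection on $(P,\Xi)$.

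The second step is the reconstruction from the proof of \autoref{RFg3}. The neutral component $\Fix_P(\sigma)_0$ together with the restricted polarization equals $P(\epsilon_2)$, and for $g\ge 6$ the ramified Prym map $\cP_{2,2g-6}$ is injective by \cite[Theorem~1.1]{NO} since $2g-6\ge 6$, so $\sigma$ determines $\epsilon_2$, and in particular the triple $(F,L',B)$ with $L'=L\otimes\eta_1$. By \autoref{lemma_g=2} every extension of this triple to a $4$-tuple $(F,L,B,\eta_1)$, and hence to an element $(C,\eta)\in\cR\cF_{g,3}$ mapping to $(P,\Xi)$, is determined by the choice of a nontrivial $\eta_1\in JF[2]$. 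As there are exactly $15$ such choices, the generic fiber has at most $15$ points. The main obstacle is the uniqueness of $\sigma$: without \autoref{Prym_simple} one could a priori have extra pseudoreflections coming from additional endomorphisms or from distinct isogeny decompositions of $P$; the rest of the argument is simply a count of nontrivial $2$-torsion points on $F$.
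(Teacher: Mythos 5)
Your proof is correct and takes essentially the same route as the paper: both invoke \autoref{Prym_simple} to make $P(\epsilon_2)$ simple, deduce from the resulting isogeny $P\sim E\times Z$ that the pseudoreflection is unique, and then count the $15$ nontrivial choices of $\eta_1\in JF[2]$ (your block-diagonal endomorphism argument just spells out what the paper leaves implicit). The only point you gloss over is $g=5$, where $2g-6=4<6$ so \cite[Theorem~1.1]{NO} does not apply; since the corollary concerns the generic fiber, generic injectivity of the relevant ramified Prym map still suffices there.
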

\begin{proof}
We have already seen at the beginning of Subsection~\ref{sec:g=2} that an element of the image of $\cP_g|_{\cR\cF_{g,3}}$ depends on a tuple $(F,L,B,\eta_1)$.
For a general $F$, $L$ and $B$, we have that the Prym variety associated to the double cover is simple by \autoref{Prym_simple}.
This implies that for a general $(P,\Xi)$ in the image of $\cP_g|_{\cR\cF_{g,3}}$, we have an isogeny $P\simeq E\times Z$, where $E$ is an elliptic curve and $Z$ is simple. Note that this implies that $E$ is the only elliptic curve lying on $P$, since any other elliptic curve $F$ would induce a non-zero homomorphism $(f,g):F\to E\times Z$. Since the image of $g$ would be isogenous to $F$ if $g\neq 0$, then by the simplicity of $Z$, we have that necessarily $g=0$ and so $F=E$. Given that $P$ is in the image of $\cP_g|_{\cR\cF_{g,3}}$, it necessarily contains a pseudoreflection. On the other hand, since a pseudoreflection on an abelian variety of dimension $g-1$ must have $-1$ as an eigenvalue with multiplicity $1$ and $1$ as an eigenvalue with multiplicity $g-2$, and since the only codimension $1$ abelian subvariety of $P$ is $Z$, the only possible pseudoreflection in this case is given by multiplication by $-1$ on $E$ and the identity on $Z$. This implies that the given pseudoreflection is unique.
Therefore, the different possibilities for constructing $P$ from a curve $(C,\eta)$ depend  on the choice of $\eta_1$ among the nontrivial $2$-torsion points of $JF$ and on the coverings of degree $2$ of $Z$ as in the proof of Theorem \ref{RFg3}. Hence, we obtain the bound in the statement.
\end{proof}

\subsection{A distinguished example with \texorpdfstring{$g(F)=2$}{g(F)=2} (Cubic threefolds with Eckardt points).}\label{Eckardt}
Prym varieties of dimension $5$ with a pseudoreflection appear naturally when considering intermediate Jacobians of cubic threefolds with Eckardt points, that is, points such that there are infinitely many lines in the cubic threefold passing through them.
We refer to \cite{CZ}, with special attention to Section~7 as a reference for this subsection.

Let $X\subset \mathbb P^4$ be a cubic threefold and let $p\in X$ an Eckardt point.
Then, there is an involution $\sigma_p $ of $\mathbb P^4$ such that $\sigma_p(X)=X$.
The set of fixed points by $\sigma_p $ consists of $p$ and a hyperplane $H$ with $p\notin H$.
We denote by $S$ the cubic surface $H\cap X$ which is smooth.
Recall that for any line $l$ in the Fano surface, $F(X)=\set{r\in \operatorname{Gr}(2,5)\mid r\subset X}$, the projection to a supplementary plane of $l$ provides a conic-bundle structure on $X$, that is, a map from the blow-up of $X$ at $l$ to $\bP^2$ whose fibers are conics:
\[
\pi_l:\operatorname{Bl}_l X \to \bP^2.
\]
The discriminant plane curve, where the fibers of $\pi_l$ degenerate, is a plane quintic.
There is a curve $\widetilde Q_l\subset F(X)$ parametrizing the lines contained in these degenerate conics. Observe that $\widetilde Q_l$ can be seen as the set of lines intersecting $l$. In particular, $\sigma_p$ acts on $\widetilde Q_l$.

It turns out that the Prym variety of the natural covering $\widetilde Q_l\rightarrow Q_l$ is isomorphic, as principally polarized abelian varieties, to the intermediate Jacobian $JX$.
As it is proved in \cite[Lemma~7.10]{CZ}, the quintic given by any of the $27$ lines $l\in F(X)$ contained in the cubic surface $S$, comes equipped with a degree $2$ map $f:Q_l\rightarrow F$ on a curve $F$ of genus $2$.
Moreover, the involution on $Q_l$ determined by $f$ induces on $P(\widetilde Q_l,Q_l)\cong JX$ the same pseudoreflection as the involution $\sigma_p$ on $X$.
Note that the action of $\sigma_p$ on $\widetilde Q_l$ has exactly $12$ fixed points ($2$ correspond to the lines in $X\cap(p\vee l)$ and $10$ to the lines contained in the cubic surface $X\cap H$ and intersecting $l$). Then, by Riemann-Hurwitz formula $g(C_1)=g(\widetilde Q_l/\langle \sigma_p \rangle)=3.$  
Hence $(\widetilde Q_l,Q_l)\in \cR\cF_{6,3} $ and $JX$ is an example of Prym variety with a pseudoreflection in $\cP_g(\cR\cF_{6,3})$ with $27$ representations as Prym variety of the double covering of a quintic plane curve.

Note that this example also shows that there are Prym varieties with more than one pseudoreflection of geometric origin.
Indeed, consider a cubic threefold $X$ with equation:
\[
z \, Q(u,v)+ (x^2+y^2)\, L(z,u,v)=0,
\]
where $L, Q$ are appropriate forms of degrees $1$ and $2$ respectively such that the threefold is smooth.
Then $[1:0:0:0:0]$ is an Eckardt point with associated involution $[x:y:z:u:v]\mapsto [-x:y:z:u:v]$.
In the same way $[0:1:0:0:0]$ is another Eckartd point with involution $[x:y:z:u:v]\mapsto [x:-y:z:u:v]$.
The hyperplane of fixed points for the first involution is $x=0$ and for the second $y=0$.
The line $l:=\{x=y=z=0\}$ belongs to both cubic surfaces $X\cap \{x=0\}$ and $X\cap \{y=0\}$.
Hence the discriminant quintic of the conic bundle given by the projection from $l$ has two natural involutions and, according with the description above, the intermediate Jacobian $JX$ has two different pseudoreflections of geometric origin.

It is worth mentioning that \cite{bo26} contains examples of $5$-dimensional Prym varieties with pseudoreflections of geometric origin that are not intermediate Jacobians of cubic threefolds.

\subsection{The three families are different}\label{different}
We have found, assuming $g\ge 5$, the existence of three irreducible families of Prym varieties with an action of a finite group of geometric origin and with smooth quotient: 
\[
\cA_1 \times \mathcal {JH}_{g-2}, \cP_g(\mathcal {RB}_{g,2}), \cP_g(\mathcal {RF}_{g,3}) \subset \cA_{g-1}.
\]
The first has dimension $2g-4$, and the other two have dimension $2g-3$.
By the discussion at the beginning of Section~\ref{sec:involutions}, any Prym variety with such an action belongs to one of these families.
It is natural to ask if they are different and, more generally, to compute how big the intersection of two of them can be. By the Castelnuovo-Severi inequality (see e.g. \cite[Theorem~3.5]{Ac})
 bielliptic curves of genus at least $4$ are never hyperelliptic.
Combined with \autoref{products}, we deduce that:
\[
\cA_1 \times \mathcal {JH}_{g-2} \cap \cP_g(\mathcal {RB}_{g,2}) = \emptyset.
\]
To understand the intersection of $\cA_1 \times \mathcal {JH}_{g-2}$ with $ \cP_g(\mathcal {RF}_{g,3}) $ we need to know when a curve of genus $g\ge 5$ with a degree $2$ map on a curve of genus $2$ is hyperelliptic.

\begin{lemma}\label{lem:hyper}
Let $\epsilon : C\rightarrow F$ be a degree two map of smooth curves with $g(C)\ge 5$ and $g(F)=2$.
Let $L\in \Pic ^{g-3}(F)$, $B\in \vert L^2 \vert$ the data determining $\epsilon $.
We have:
\begin{enumerate}
 \item\label{casea:g6} If $g\ge 6$, then $C$ is not hyperelliptic.
 \item\label{caseb:g5} If $g=5$ and $C$ is hyperelliptic, then $L=\omega_F$.
\end{enumerate}
\end{lemma}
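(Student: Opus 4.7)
The plan is to exploit that when $C$ is hyperelliptic its hyperelliptic involution $h$ is central in $\Aut(C)$, so $h$ commutes with the Galois involution $\sigma$ of $\epsilon$ and descends to an involution $\bar h$ on $F$. I would first rule out $\bar h=\id_F$ (which forces $h=\sigma$ and hence $F\cong\mathbb{P}^1$, contradicting $g(F)=2$) and also rule out $\bar h$ being a bielliptic involution on $F$: such an involution has only $2$ fixed points on $F$, and every fixed point of $h$ on $C$ projects to one of them, giving $n_h\le 4<2g+2$. Thus $\bar h$ must be the hyperelliptic involution of $F$, whose fixed locus is the set $W_F$ of the $6$ Weierstrass points.

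Next I would count the fixed points of $h$ on $C$ over $W_F$. A preimage of $W\in B$ is a single point, automatically $h$-fixed, while the two preimages of $W\notin B$ are either both fixed or swapped by $h$. Writing $b:=|B\cap W_F|$ and $u$ for the number of non-branch Weierstrass points whose fiber is pointwise fixed by $h$, this yields $n_h=b+2u\le 12-b\le 12$. For $g\ge 6$ we would need $n_h=2g+2\ge 14$, a contradiction that settles part~(a). For $g=5$ the equality $n_h=12$ forces $b=0$ and $u=6$, so $B$ is disjoint from $W_F$, and being $\bar h$-stable it decomposes into two $\bar h$-pairs $\{x_i,\bar h(x_i)\}$; since each such pair lies in the pencil $|\omega_F|$, we obtain $B\sim 2\omega_F$ and hence $L\cong\omega_F\otimes\tau$ for some $\tau\in JF[2]$.

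The main obstacle is then to rule out the fifteen non-trivial twists. My plan is to combine the canonical bundle identity $\omega_C\cong\epsilon^*(\omega_F\otimes L)$ (from the ramified double cover formula) with $\omega_C\cong\pi^*\mathcal{O}_{\mathbb{P}^1}(g-1)$ for the hyperelliptic map $\pi\colon C\to\mathbb{P}^1$. The geometric input is that $\bar\pi\circ\epsilon$ (with $\bar\pi$ the hyperelliptic of $F$) coincides with the quotient $C\to C/\langle h,\sigma\rangle$, which also factors through $\pi$ as $C\xrightarrow{\pi}C/h\xrightarrow{\mu}C/\langle h,\sigma\rangle$ with $\mu$ of degree $2$. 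This gives $\epsilon^*\omega_F\cong\pi^*\mathcal{O}(2)$, and hence $\epsilon^*L\cong\pi^*\mathcal{O}(g-3)$; for $g=5$ we obtain $\epsilon^*L\cong\epsilon^*\omega_F$, so $\tau$ lies in $\ker(\epsilon^*\colon JF\to JC)$.

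Finally I would show this kernel is trivial. Any $\tau\in\ker\epsilon^*$ satisfies $2\tau=\Nm_\epsilon\,\epsilon^*\tau=0$, so $\tau\in JF[2]$, and the projection formula produces an $\mathcal{O}_F$-module isomorphism $\tau\oplus(\tau\otimes L^{-1})\cong\mathcal{O}_F\oplus L^{-1}$. Since $\deg L=g-3>0$ we have $L\not\cong\mathcal{O}_F$, and Krull--Schmidt for direct sums of non-isomorphic line bundles forces either $\tau\cong\mathcal{O}_F$ or $\tau\cong L^{-1}$ with $L^2\cong\mathcal{O}_F$; the latter contradicts $B\ne 0$. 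Hence $\tau$ is trivial and $L\cong\omega_F$.
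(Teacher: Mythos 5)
Your proof is correct, but it takes a genuinely different route from the paper's. The paper pushes the hyperelliptic pencil down to $F$: writing an effective divisor of the $g^1_2$ as $D=\epsilon^{-1}A_0+A_1$, it invokes Mumford's exact sequence
$0\to\cO_F(A_0)\to\epsilon_*\cO_C(D)\to\cO_F(A_0+\Nm_{\epsilon}A_1)\otimes L^{-1}\to0$
and kills $h^0$ of the right-hand term by a degree count ($4-g<0$, resp.\ $5-g<0$), which settles part (a) in two lines and, for $g=5$, forces $\cO_F(\Nm_{\epsilon}A_1)\cong L$; moving $A_1$ in the pencil then shows $|L|$ is a $g^1_2$ on $F$, i.e.\ $L=\omega_F$, so the $2$-torsion ambiguity never arises. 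You instead exploit the centrality of the hyperelliptic involution $h$, classify the descended involution $\bar h$ on the genus-$2$ base (identity, bielliptic, or hyperelliptic), and count fixed points of $h$ over the Weierstrass points of $F$; this is more elementary (no pushforward sequence, only Riemann--Hurwitz and uniqueness of the $g^1_2$) and yields extra geometric information in the $g=5$ case (the branch divisor must avoid $W_F$ and split into two $\bar h$-orbits), but it only pins down $L$ up to $JF[2]$, so you need the additional steps with $\omega_C\cong\epsilon^*(\omega_F\otimes L)$, the factorization of $\bar\pi\circ\epsilon$ through $C/\langle h,\sigma\rangle$, and the injectivity of $\epsilon^*$ on $\Pic^0$ (automatic here since $\epsilon$ is ramified, as your projection-formula argument shows) to remove the twist. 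All of these steps check out, so your argument is a valid, if longer, alternative.
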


\begin{proof}
Part \eqref{casea:g6} is again a consequence of the Castelnuovo-Severi inequality quoted above.
Part \eqref{caseb:g5} can be found in \cite[Proposition~3.2]{bo}.
\end{proof}

\begin{corollary}
 For $g\ge 6$, we have $\cA_1 \times \mathcal {JH}_{g-2} \cap \cP_g(\mathcal {RF}_{g,3}) = \emptyset$.
 For $g=5$, we have 
 \[
 \dim \left(\cA_1 \times \mathcal {JH}_{3} \cap \cP_5(\mathcal {RF}_{5,3})\right)\le 5.
 \]
 \end{corollary}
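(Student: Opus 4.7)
The strategy is to leverage \autoref{lem:hyper} together with \autoref{products}. Any point of $\cA_1\times\mathcal{JH}_{g-2}$ is, by construction, a polarized product of an elliptic curve with a hyperelliptic Jacobian; \autoref{products} tells us that a Prym variety $(P,\Xi)=\cP_g(C,\eta)$ admitting such a polarized decomposition must come from a hyperelliptic curve $C$. So to bound the intersection, it suffices to bound the hyperelliptic locus inside $\cR\cF_{g,3}$ and push forward.

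For $g\ge 6$, emptiness of the intersection is then immediate: if $(P,\Xi)$ belonged to $\cA_1\times\mathcal{JH}_{g-2}\cap\cP_g(\cR\cF_{g,3})$, any preimage $(C,\eta)\in\cR\cF_{g,3}$ would force $C$ hyperelliptic by \autoref{products}, contradicting \autoref{lem:hyper}\eqref{casea:g6}.

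For $g=5$, I would argue by a direct parameter count using the parametrization $(F,L,B,\eta_1)$ of $\cR\cF_{5,3}$ from \autoref{lem:Fg}. By \autoref{lem:hyper}\eqref{caseb:g5}, hyperellipticity of $C$ pins down $L=\omega_F$. The resulting locus of tuples $(F,\omega_F,B,\eta_1)$ is parametrized by $F\in\mathcal{M}_2$ (dimension $3$), the reduced divisor $B\in|\omega_F^2|$ (a dense open of a $\bP^2$, since $h^0(F,\omega_F^2)=3$, so dimension $2$), and $\eta_1\in JF[2]\setminus\{0\}$ (the $15$ nonzero $2$-torsion points, finite), for a total dimension of $5$. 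Since $\cP_5$ is a regular morphism, its image on this locus, which contains the intersection in question, has dimension at most $5$.

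The only delicate input is really the first step: identifying that every element of the intersection has hyperelliptic source curve, for which \autoref{products} (based on Beauville's analysis of $\operatorname{Sing}(\Xi)$) is the essential ingredient. The remainder is a routine dimension count fed by the explicit parametrization already established in \autoref{lem:Fg}, so I do not anticipate any serious obstacle beyond correctly tracking the two parameters lost when $L$ is forced to equal $\omega_F$.
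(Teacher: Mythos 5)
Your proposal is correct and follows essentially the same route as the paper: \autoref{products} forces the source curve to be hyperelliptic, \autoref{lem:hyper}\eqref{casea:g6} then gives emptiness for $g\ge 6$, and for $g=5$ the constraint $L=\omega_F$ from \autoref{lem:hyper}\eqref{caseb:g5} reduces the parameter count to $3+2=5$ (with $\eta_1$ contributing only finitely many choices). Your explicit justification of the hyperellipticity step via \autoref{products} and the computation $h^0(F,\omega_F^2)=3$ merely make explicit what the paper leaves to the surrounding discussion.
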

\begin{proof}
By \autoref{lem:hyper}, the first statement follows and only the statement for $g=5$ has to be proved.
Since $L=\omega_F$ (the $g^1_2$ of $F$), the divisor $B\in \vert L^2\vert$ moves in a projective space of dimension $2$ and $F\in \cM_2$.
Thus, the intersection depends on, at most, $5$ parameters.
\end{proof}

Note that $\dim \cA_1 \times \mathcal {JH}_{3}=6$, hence, even in genus $5$, the families are different.
Finally, we prove the following:

\begin{proposition}\label{prop:different}
The families $\cP_g(\mathcal {RB}_{g,2})$ and $\cP_g(\mathcal {RF}_{g,3})$ have different closure.
\end{proposition}
\begin{proof}
We proceed by contradiction, assume that the two subvarieties of $\cA_{g-1}$ have the same closure.
Let $(P,\Xi)$ be a general Prym variety in this family.
A crucial observation is that $(P,\Xi)$ is not a Jacobian variety.
Indeed, this is a parameter count: the Prym variety $P$ contains an elliptic curve and the subvariety
\[
\cJ^1_{g-1}=\{(JN,\Theta _N)\in \cJ_{g-1} \mid JN \text{ contains an elliptic curve }\}\subset \cA_{g-1}
\]
can be identified with the locus of curves $N\in\cM_{g-1}$ with a morphism onto an elliptic curve.
This is a countable union of subvarieties of dimension $2(g-1)-2=2g-4$.
Since $\dim \cP_g(\mathcal {RB}_{g,2})=2g-3$, we are done.

If $(P,\Xi)\in \cP_g(\mathcal {RB}_{g,2})$ is general, then $\dim \operatorname{Sing} \Xi =g-5$.
This can be found in \cite[Proposition~5.2.2]{De} 
for $g\ge 6$.
For $g=5$ we observe the following: let $(\tC,C)\in \mathcal {RB}_{5,2}$. The map $\epsilon \colon C\to E$ is determined by a  branch divisor and a line bundle $L\in \Pic^4(E)$. Note that $\omega_C=\epsilon^*L$. 
Let $M_i$, $1\le i\le 4$, be the four line bundles such that $M_i^2=L$.
Then the pull-backs $\pi^*\epsilon^*M_i$ to $\tC$ have norm $\omega_C$ and $4$ global sections.
According to \cite{M} these provide singularities of the theta divisor $\Xi$.
Hence $\operatorname{Sing} \Xi  \neq \emptyset$. Moreover, if $\dim \operatorname{Sing} \Xi >0$, then by  
the classification provided by \cite[Theorem~4.10]{be}, $P$ would be a hyperelliptic Jacobian, a contradiction. Thus, for all $g\ge 5$ we have 
$\dim \operatorname{Sing} \Xi =g-5$. Going back to the classification in \cite[Theorem~4.10]{be}, we obtain that:
\begin{enumerate}
\item\label{casea:trigonal} either $C$ is trigonal,
\item\label{caseb:bielliptic} or $C$ is bielliptic of genus $g\ge 6$,
\item\label{casec:nullvierte} or $C$ is a genus $5$ curve with a theta-characteristic $N$ with $2$ sections and $h^0(\tilde C,\pi^*N)$ is even,
\item\label{cased:planequintic} or $C$ is a genus $6$ smooth plane quintic and $h^0(\tilde C, \pi^*\cO_C(1))$ is odd.
\end{enumerate}
We can exclude the case \eqref{casea:trigonal} in our discussion, since by Recillas' construction (see \cite[\S2.4]{do_fibres}) the Prym variety of an unramified double cover of a trigonal curve is always a Jacobian variety.

Now we represent our general Prym variety in the family as the image of an element $(\tC,C)$ in $\mathcal {RF}_{g,3}$.
Observe that,  by \autoref{RFg3}, the map $\cP_g|_{\cR\cF_{g,3}}$ has finite fibers, hence we can assume that $C$ is general. Now we prove that a general curve $C$ of genus $g$ with a degree $2$ map onto a curve of genus $2$ is not bielliptic. 
Indeed, if $C$ were general with a map $C\rightarrow F$, $g(F)=2$, then its Jacobian would be isogenous to $P(C,F)\times JF$ and $P(C,F), JF$ are simple (by \autoref{Prym_simple}), so $C$ is not bielliptic.

We consider now the case $g=6$.
We want to see that a general curve of genus $6$ with a degree $2$ map $\epsilon : C\rightarrow F$ onto a curve of genus $2$ is not a plane quintic.
Assume that $C$ is a plane quintic with an involution $f$ with exactly six fixed points.
This is equivalent to the existence of a map $C\rightarrow F$ with $g(F)=2$.
The involution leaves $\cO_C(1)$ invariant since $W^2_5(C)$ consists of exactly one point 
and therefore $f$ is the restriction of a linear involution of the projective plane, that we still denote by $f$, leaving $C$ invariant.
Thus, there is a line $r\subset \bP^2$ of pointwise fixed points for $f$ and at most another fixed point not belonging to $r$.
Then $r$ intersects $C$ in $5$ different points and there is a fixed point $P\in C\setminus r$.

Considering coordinates such that $P=[1:0:0]$ and $r=\{x=0\}$ we have that $f([x:y:z])=[x:-y:-z]$.
Since $C$ is invariant by $f$, its equation $H$ satisfies that $H(x,-y,-z)=\lambda H(x,y,z)$.
One easily sees that $\lambda =\pm 1$.
The case $\lambda =1$ implies that $x$ divides $H$, hence we assume that $\lambda = -1$.
In this case $H$ is of the form
\begin{equation}\label{eq_quintic}
\sum_{i+j+k=5} A_{ijk}x^i y^j z^k=0,\quad \quad A_{ijk}=0 \quad \text{ if } \quad j+k \quad \text{ is even}.
\end{equation}
In particular $A_{500}=0$.
All these equations corresponding to smooth quintics are para\-metrized by an open set of $\mathbb P^{11}$.
Then, up to the action of the elements of $\operatorname{PGL}(3)$ preserving the line $x=0$ and the point $[1:0:0]$ (which is $4$-dimensional), the family is $7$-dimensional.
Since the locus of curves of genus $6$ with a map onto a curve of genus $2$ has dimension $9$, we are done.

The genus $5$ case is more involved. The following lemma finishes the genus $5$ case and therefore the proof of the \autoref{prop:different}.
\end{proof}

\begin{lemma}
Let $C$ be a curve of genus $5$ with a degree $2$ morphism $\epsilon : C\rightarrow F$ onto a curve $F$ of genus $2$. Then $C$ does not have a semicanonical pencil (a theta-characteristic with two sections).
\end{lemma}
\begin{proof}
Let us denote by $\cF$ the closure in $\cM_5$ of the locus of curves $C$ with a degree $2$ morphism onto a curve $F$ of genus $2$, and $\cT\subset \cM_5$ the irreducible divisor of curves with a semicanonical pencil, see \cite{Te}. Note that $\dim \cF=7$. Our aim is to prove that $\cF \not\subset  \cT$.

We first note that $C$ is neither hyperelliptic nor trigonal. This is an easy consequence of the inequality in \cite[Lemma 3.1]{N}.
Hence, $C$ can be embedded in $\mathbb P^4$ as the complete intersection of $3$ quadrics.
Note that $f$ induces an involution on $\mathbb P^4$ that we still denote by $f$.
The quadrics containing $C$ determine a plane, and the degenerate quadrics of the plane give a plane quintic. It is well-known that the quadrics of rank $3$ containing $C$ are in bijection with the semicanonical pencils in $C$ and with the nodes of $Q_C$ (see \cite[Cap VI, Exer. F]{ACGH}). This quintic $Q_C$ comes equipped with a natural unramified double covering $\widetilde {Q_C}$ provided by the rulings of the quadrics and $P(\widetilde {Q_C},Q_C)=JC$.
In the literature, the closure of the locus of coverings as above in $\widetilde {\cR_6}$ (the Beauville partial compactification of $\cR_6$, see \cite{be}) is denoted by $\mathcal{RQ}^+$. The Prym map gives a birational map between $\mathcal{RQ}^+$ and the Jacobian locus $\cJ_5$. To be more precise, following \cite[Subsection~(4.3)]{do_fibres}, the description given above gives a map
\[
\cJ_5 \dashrightarrow \mathcal{RQ}^+\subset \widetilde {\cR_6} 
\]
which is the inverse of the Prym map restricted to $\mathcal{RQ}^+$. We assume that $\cF\subset \cT$. By what we said before, by applying $\cT$ to $\mathcal{RQ}^+$ and projecting to $\overline {\cM_6}$, we obtain nodal quintics.
Therefore, the quintics coming from curves in $\cF$ would have also a node.
As we noticed, the quintics coming from curves of $\cF$ are invariant by an involution of the plane.
We gave in the case $g=6$ a parametrization of the quintics with an involution (that has to be the restriction of an involution of the plane), we obtained an irreducible subvariety of $\overline{\cM_6}$ of dimension $7$.
This proves that the image of $\cF$ is exactly this family of quintics invariant by involutions.
Since a general polynomial of the form  \eqref{eq_quintic}, with $A_{500}=0$, corresponds to a non-singular curve, we reach a contradiction.  
\end{proof}

\section{List of potential groups acting on Prym varieties with smooth quotient}\label{listofgroups}

We recall from \cite[Theorem~1.1]{ALA} that if $(P,\Xi)$ is a principally polarized abelian variety and $H\leq\operatorname{Aut}(P,\Xi)$ is such that $P/H$ is smooth, then there is an isogeny $P\simeq X\times Y$ where $H$ acts trivially on $X$, and on $Y$ with finitely many fixed points.
Moreover, there exist elliptic curves $E_1,\ldots,E_r,F_1,\ldots,F_s$ such that
\[Y\cong \prod_{i=1}^rE_i^{n_i}\times\prod_{j=1}^sF_j^{m_j}\]
 \begin{equation}\label{possibleG} H= \left[\prod_{i=1}^r(\bZ/m_i\bZ)^{n_i}\rtimes S_{n_i}\right]\times\prod_{j=1}^sS_{t_j+1}\end{equation}
 where $m_i\in\{2,3,4,6\}$, $n_i\geq2$, $t_j\geq1$, and each factor of $H$ acts on the corresponding factor of $Y$ in an explicit way. The following result severely restricts the possible groups of geometric origin that can act on a Prym variety:

 \begin{proposition}
If $(C,\eta)$ corresponds to the double covering $\tC\to C$ with Prym variety $(P,\Xi)$, and $G\leq\operatorname{Aut}(\tC)$ is a subgroup such that each of its elements commutes with $\iota$, let $\rho:G\to\operatorname{Aut}(P,\Xi)$ denote the natural representation of $G$ on $P$. If the genus of $C$ is greater than or equal to $5$, then we have the following: 

\begin{enumerate}
\item If $P/\rho(G)$ is smooth, then $G$ is a $2$-group. 
\item If $\ker\rho$ is trivial, then $G$ must be isomorphic to one of the following nine groups:
\[(\bZ/2\bZ)^i\times(\bZ/4\bZ)^j\hspace{0.5cm}\text{for }0\leq i,j\leq 2,i+j\leq 2\]\[(\bZ/2\bZ)^3,\hspace{0.3cm}(\bZ/2\bZ)^2\rtimes S_2,\hspace{0.3cm}(\bZ/4\bZ)^2\rtimes S_2,\hspace{0.3cm}\left[(\bZ/2\bZ)^2\rtimes S_2\right]\times S_2.\]
\item If $\ker\rho$ is trivial and $\Xi$ is irreducible, then the only possibilities for $G$ are $(\bZ/2\bZ)^i$ for $i=1,2,3$.

\end{enumerate}
\end{proposition}

\begin{proof}
Write $H=\rho(G)$ as in equation \eqref{possibleG}. First, by \autoref{order3}, we have that if $G$ possesses an element of order $3$, then the codimension of its fixed point locus in $P$ must be at least 3. However, we see from equation \eqref{possibleG} and by the action that each factor has, that if either $n_i\geq 3$ or $t_j\geq2$, then there exists a 3-cycle that fixes a codimension 2 locus. Indeed, any two 3-cycles are conjugate in the symmetric group, and we can therefore take a look at the 3-cycle $(1\,2\,3)$. If we are in the case $n_i\geq 3$ or $t_j\geq 3$, then this element just acts on a self-product of an elliptic curve $E^n$ by permuting the first three coordinates. We see in this case that the fixed locus of the 3-cycle is then exactly $\{(x,x,x,x_4,\ldots,x_n):x,x_4,\ldots,x_n\in E\}$, which is of codimension $2$. In the case $t_j=2$, then we are in the case of $E^2$, and the element of order $3$ then acts as $(x,y)\mapsto(-x-y,x)$. We see that its fixed locus is precisely the subvariety $\{(x,x):x\in E[3]\}$, which is finite and therefore of codimension $2$.

Therefore, we automatically have that $m_i\in\{2,4\}$, $n_i\in\{1,2\}$, and $t_j=1$, and so we can write
\[\rho(G)=(\bZ/4\bZ)^a\times\left[\prod_{i=1}^b(\bZ/m_i\bZ)^{2}\rtimes S_{2}\right]\times\prod_{j=1}^sS_{2}.\]
By \autoref{lem:ord2kernel}, we have that the order of $\ker\rho$ is at most $2$, and therefore $G$ must be a $2$-group.

For the proof of the following two items, we will assume that $\rho$ is injective, and we can therefore identify $G$ with $H=\rho(G)$.

If $\sigma\in G$ is a pseudoreflection of order $2$, then we have that the subgroup $C_G(\sigma)$ of elements in $G$ that commute with $\sigma$ descends to an action on $\tilde{C}/\iota\sigma$. We will analyze the possibilities for $G$ case by case, according to the three cases studied in the preceding sections.

If $(C,\eta)\in\cA_1\times\mathcal{JH}_{g-2}$ and $s\neq 0$, we can take a pseudoreflection $\sigma$ from one of the factors $S_2$, and obtain that $G/\sigma$ acts on $JC_2$ with smooth quotient.
Since the action also fixes the polarization, we get a Jacobian with a smooth quotient.
By \cite[Theorem~1.2]{ALA}, we have that $g(C_2)\leq 3$ and $G/\sigma$ must be isomorphic to either $\bZ/2\bZ$, $(\bZ/2\bZ)^2$ or $S_3$ (the theorem that is cited actually states that only $\bZ/2\bZ$ can appear, but in order to discard the other two groups, the proof uses \cite[Lemma~4.1]{ALA} which was found to contain a mistake. Therefore, the proof of \cite[Theorem~1.2]{ALA} only shows that $G$ is either isomorphic to $\bZ/2\bZ$, $(\bZ/2\bZ)^2$ or $S_3$).
Now this implies that necessarily $a=b=0$, and therefore $s\leq 3$.
On the other hand, if $s=0$ and $b\neq 0$, then we can take $\sigma$ in one of the $(\bZ/2\bZ)^2\rtimes S_2$ as one of the elements that multiplies a coordinate by $-1$;
the same analysis gives us that if we take another pseudoreflection that commutes with $\sigma$, then $JC_2$ has a smooth quotient by this pseudoreflection and therefore $g(C_2)\leq 3$ by \cite[Theorem~1.2]{ALA}.
In particular, we also obtain that $a=0$, $b=1$, $m_1=2$, and so $G=(\bZ/2\bZ)^2\rtimes S_2$. Finally, if $b=s=0$ and $a\neq 0$, then a similar analysis leads us to conclude that $a$ must be equal to $1$.

If $(C,\eta)\in \mathcal{P}_g(\mathcal{RB}_{g,2})$, then the genus of $\tilde{C}/\iota\sigma$ is $2$.
If $b\geq 1$, let $\sigma$ be a pseudoreflection of order $2$ in one of the copies of $\bZ/m_1\bZ$.
Then $C_G(\sigma)=(
\bZ/4\bZ)^a\times(\bZ/m_1\bZ)^{2}\times\left[\prod_{i=1}^{b-1}(\bZ/2\bZ)^{2}\rtimes S_{2}\right]\times\prod_{j=1}^sS_{2}$.
By looking at Broughton's list \cite{Broughton}, the only group that acts in genus 2 of this form is $(\bZ/2\bZ)^2$.
Therefore, here $b=1$ and $a=s=0$, and so $G=(\bZ/2\bZ)^{2}\rtimes S_{2}$.
If $b=0$, then $G$ is commutative and so must act on $\tilde{C}/\iota\sigma$.
For the same reason as before, we have that either $a=1$ and $s=0$, or $a=0$ and $s\in\{1,2\}$. 

If $(C,\eta)\in\mathcal{P}_g(\mathcal{RF}_{g,3})$, we have that the genus of $\tilde{C}/\iota\sigma$ is $3$.
If $b\geq 1$, again, looking at Broughton's list, we see that $b=1$, $a=0$, and $s\in\{0,1\}$. This gives us the following possibilities for $G$:
\[(\bZ/2\bZ)^2\rtimes S_2,(\bZ/4\bZ)^2\rtimes S_2,\left((\bZ/2\bZ)^2\rtimes S_2)\right)\times\mathbb{Z}_2.\]
If $b=0$, then $G$ is abelian, and again by Broughton's list, we obtain the following possibilities for $G$:
\[(\bZ/2\bZ)^i\times(\bZ/4\bZ)^j\hspace{0.5cm}\text{for }0\leq i,j\leq 2,i+j\leq 2.\]
In the irreducible case, the main result of \cite{ALA}, together with the previous work, says that $G$ must be a product of cyclic groups of order $2$, and the statement follows.
\end{proof}

\begin{rem}
It is not clear if all of these groups actually act on Prym varieties (with geometric origin) with a smooth quotient, and it would indeed be an interesting challenge to see if examples of these groups acting on Prym varieties with smooth quotients can be constructed.
In this article we showed that the groups $\bZ/2\bZ$ and $(\bZ/2\bZ)^2$ can appear (see the subsection on \hyperref[Eckardt]{cubic threefolds with Eckardt points} for the latter case).
\end{rem}

\end{document}